%--- 09-27 referee's corrections (minor)
% --- 05-03 reference to the second def of tau-Fredholmness ----
%---- 03-24 Ognjen's comments ---
%-----------------------------------------------------------------------
%-----------------------------------------------------------------------
%Style section

\documentclass[reqno,11pt]{amsart}
\usepackage{amscd,amssymb}
\usepackage{amsmath}
\usepackage{hyperref}
\usepackage{amsrefs}
\usepackage{comment}
\usepackage{tikz}
\usetikzlibrary{matrix}
\usepackage[toc,page]{appendix}
\usepackage{marginnote}

%\usepackage[inline]{showlabels}\renewcommand{\showlabelfont}{\tiny}
%----------------------------------------------------------------
%----------------------------------------------------------------

\numberwithin{equation}{section}

%\swapnumbers

\theoremstyle{plain}
\newtheorem{theorem}[subsection]{Theorem}
\newtheorem{lemma}[subsection]{Lemma}
\newtheorem{corollary}[subsection]{Corollary}
\newtheorem{proposition}[subsection]{Proposition}

\theoremstyle{definition}
\newtheorem{definition}[subsection]{Definition}
\newtheorem{example}[subsection]{Example}

\theoremstyle{remark}
\newtheorem{remark}[subsection]{Remark}

\usepackage{amsmath,amsfonts,latexsym}

\newcommand{\F}{\mathbb{F}}

%\newcommand{\1}{\mathbf{1}}  % if this does not work: \boldsymbol{1}

 %absolute value
%\newcommand{\bigabs}[1]{\left\lvert#1\right\rvert} %absolute value

\DeclareMathOperator{\Ker}{Ker}

   %support
  %closure
\DeclareMathOperator{\im}{im}      %image
    %Volume
  %Diameter
  %Distance
    %order
    %Endomorphisms
    %Homomorphisms
    %Tor
    %Ext

\DeclareMathOperator{\Tr}{Tr}

%\DeclareMathOperator{\res}{res}  %Restriction
  %Restriction
% \DeclareMathOperator{\Sp}{Sp}    %Spur % schon definiert???
  %flip

\DeclareMathOperator{\ind}{ind}

  % Ricci curvature
  % Scalar curvature

\newcommand{\forget}[1]{}

\newcommand{\innerprod}[1]{\langle #1 \rangle}

%{\catcode`@=11\global\let\c@equation=\c@theorem}
%\renewcommand{\theequation}{\thetheorem}
% Hier werden Gleichungen und Theoreme zusammen gezaehlt. Soll ein anderer Zaehler statt theorem verwendet werden (entspr. dem \newtheorem-Befehl), muss 2-mal theorem durch diesen Zaehler ersetzt werden. (Die Zeilen entsprechen der Zaehlung von \newtheorem{equation}[theorem]).

%\renewcommand{\labelenumi}{(\arabic{enumi})}

% Diese neue Variante schreibt jetzt auch arabische Ziffern in
% Klammern beim zitieren!

\allowdisplaybreaks[2]

\newcommand{\NeumannN}{\mathcal{N}}

\newcommand{\RR}{\mathbb{R}}
\newcommand{\CC}{\mathbb{C}}
\newcommand{\ZZ}{\mathbb{Z}}
\newcommand{\calD}{\mathcal{D}}

\newcommand{\Id}{\operatorname{Id}}
\newcommand{\NGam}{\mathcal{N}\Gamma}
\newcommand{\<}{\langle}
\renewcommand{\>}{\rangle}
\newcommand{\hatD}{\widehat{D}}
\newcommand{\esssup}{\operatornamewithlimits{ess\ sup}}
\newcommand{\RE}{\mathrm{Re}}
\newcommand{\Dom}{\operatorname{Dom}}

\newcommand{\calF}{\mathcal{F}}
\newcommand{\calT}{\mathcal{T}}

\usepackage{anysize}
%------------------------------------------------------
\begin{document}
%\hfill{Preliminary version: \today}
\title[Von Neumann algebra valued Schr\"odinger operators]{Spectral Theory of von Neumann algebra valued differential operators over non-compact manifolds}
\author{Maxim Braverman${}^\dag$}
\address{Department of Mathematics,
Northeastern University,
Boston, MA 02115,
USA}

\email{maximbraverman@neu.edu}
\urladdr{www.math.neu.edu/~braverman/}

\author{Simone Cecchini}
\address{Department of Mathematics,
Northeastern University,
Boston, MA 02115,
USA}

\email{cecchini.s@husky.neu.edu}

\thanks{${}^\dag$Supported in part by the NSF grant DMS-1005888.}

%\papersize{21.0cm}{29.7cm}

\begin{abstract}
We provide criteria for self-adjointness and $\tau$-Fredholmness of first and second order differential operators acting on sections of infinite dimensional bundles, whose fibers are modules of finite type over a von Neumann algebra $A$ endowed with a trace $\tau$. We extend the Callias-type index to operators acting on sections of such bundles and show that this index is stable under compact perturbations.  
\end{abstract}
\maketitle

%------------------------------------------------------
%------------------------------------------------------
\section{Introduction}\label{S:introduction}

In this paper we extend some results about the spectral properties of first and second order differential operators on complete Riemannian manifolds (see \cites{GromovLawson83,Shubin99,BrMiSh02,Br2,Chernoff73,Ro-Be70,Ol1,Shubin92}) to operators acting on sections of certain infinite dimensional bundles, called $A$-Hilbert bundles of finite type, cf. Section~\ref{SS:A-Hilbert bundles}. As a main example we consider lifts of operators acting on a finite dimensional vector bundle over a complete Riemannian manifold $M$ to a Galois cover $\widetilde{M}$ of $M$.

Let $A$ be a von Neumann algebra with a finite, faithful, normal trace $\tau$. Let  $E^+$ and $E^-$ be $A$-Hilbert bundles of finite type over a complete Riemannian manifold $M$, cf. Section~\ref{SS:A-Hilbert bundles}. In particular, this means that the fibers of $E^\pm$  are Hilbert spaces endowed with an action of $A$. We consider a first order differential operator $D\colon C^\infty_c(M,E^+)\to C^\infty_c(M,E^-)$ (here $C^\infty_c$ denotes the space of smooth compactly supported sections). We also fix an $A$-linear bundle map $V:E^+\to E^-$. 

We give a criterion  for self-adjointness of the generalized Schr\"odinger operator $H_V:= D^*D+V$. Then we show that if there exists a constant $C>0$ such that $V(x)>C$ for all $x$ outside of a compact subset of $M$, then the {\em von Neumann spectral counting function} $N_\tau(\lambda;H_V)$ of $H_V$ is finite for all $\lambda<C$. For operators acting on finite dimensional bundles this result is obtained in \cite{Anghel93}. 

One of the motivations for our study is an attempt to extend Atiayh's $L^2$-index theory for covering manifolds, \cites{Atiyah76,Schick05}, to coverings of non-compact manifolds. As a first application to index theory we develop in  Section~\ref{S:Callias} a Callias-type index theory, \cites{Anghel93Callias,BottSeeley78,Bunke95,Callias78}, for operators acting on $A$-Hilbert bundles over non-compact manifolds.  

More specifically, given a bundle map $F:E^+\to E^-$ we provide a criterion for $\tau$-Fredholmness of the operator $D_F:= D+F$. The $\tau$-Fredholmness implies that the kernel of $D_F$ and $D_F^\ast$ have finite $\tau$-dimension, where $D_F^\ast$ is the formal adjoint of $D_F$. Hence, we can define the $\tau$-index $\ind_\tau{}D_F$.  We prove that this index is stable under compact perturbations of $F$. 
 
 Another possible application of our analysis,  which will be discussed elsewhere, is to the equivariant index theory of operators acting on a non-compact manifold with a proper action of a Lie group $G$. A significant progress was achieved recently in developing the equivariant index theory of operators  acting on sections of a finite dimensional bundle over such manifolds,  \cites{MaZhang-noncompact,MathaiHochs,MathaiZhang10,Br-index,BrNonCompactGroup,BrCano14,Paradan03,MaZhang14}.  These results  find applications in the study of geometric quantization, representations of reductive groups and other areas.  This paper provides the analytic tools needed to extend these results to operators acting on $A$-Hilbert bundles.

The paper is organized as follows. In Section~\ref{S:main} we formulate our main results. In Section~\ref{S:A-Hilbert bundles} we recall the basic properties of von Neumann algebras and $A$-Hilbert modules. In Sections~\ref{S:prsaD} and \ref{S:prsa} we prove criteria of self-adjointness for first and second order operators respectively. In Section~\ref{S:Fredholm} we prove criteria for finiteness of the spectral counting function $N_\tau(\lambda;H_V)$ and for Fredholmness of the operator $D_F:=D+F$. In Section~\ref{S:Callias} we introduce the Callias-type $\tau$-index and prove its stability. 

\subsection*{Acknowledgment} We are very grateful to Ognjen Milatovic for very careful reading of the manuscript and providing a lot of useful remarks and corrections. 

%------------------------------------------------------
%------------------------------------------------------
\section{The main results}\label{S:main}

In this section we formulate  the main results of the paper.  

%---------------
\subsection{An operator of order one}\label{SS:IDir}
Let $A$ be a von Neumann algebra with a finite, faithful, normal  trace $\tau:A\to \CC$. Let  $E=E^+\oplus{}E^-$ be a $\ZZ_2$-graded $A$-Hilbert bundle of finite type over a complete Riemannian manifold $M$, cf. Section~\ref{SS:A-Hilbert bundles}. In particular, this means that the fibers of $E^\pm$  are Hilbert spaces endowed with an action of $A$. We denote the Riemannian metric on $M$ by $g^{TM}$. 

Consider a first order differential operator $D\colon C^\infty_c(M,E^+)\to C^\infty_c(M,E^-)$ (here $C^\infty_c$ denotes the space of smooth compactly supported sections).  We assume that the principal symbol $\sigma(D)$ of $D$ is injective so that $D$ is elliptic. 

\subsection*{Assumption}
Throughout the paper we assume that there exists a constant $c>0$ such that
\begin{equation}\label{E:Istronglyelliptic}
	0\ <\ \|\sigma(D)(x,\xi)\| \ \le \ c\,|\xi|, \qquad \text{for all}\ \  x\in M, \ \xi\in T_x^*M\backslash\{0\}.
\end{equation}
Here $|\xi|$ denotes the length of $\xi$ defined by the Riemannian metric on $M$,  $\sigma(D)(x,\xi):E^+_x\to E^-_x$ is the leading symbol of $D$,  and $\|\sigma(D)(x,\xi)\|$ is its operator norm. 

An interesting class of examples of operators satisfying \eqref{E:Istronglyelliptic} is given by Dirac-type operators on $M$ paired with a connection on an $A$-Hilbert bundle, cf.  \cite{Schick05}*{Section 7.4}.

%---------------
\subsection{Self-adjointness of a first order operator}\label{SS:IsaD}
Let $d\mu$ be a smooth measure on  $M$. We don't assume that $d\mu$ is the measure defined by the Riemannian metric $g^{TM}$. Let  $L^2(M,E^\pm)$ denote the space of square-integrable sections of $E^\pm$. We also set $E:=E^+\oplus{}E^-$. Then
\[
	 L^2(M,E)\ = \ L^2(M,E^+)\oplus L^2(M,E^-).
\]

Let $D^\ast$ denote the formal adjoint of $D$. Consider the operator
\begin{equation}\label{E:IcalD}
	\calD\ :=\ \begin{pmatrix}0&D^\ast\\D&0\end{pmatrix}.
\end{equation}
Then $\calD$ is an unbounded operator on $L^2(M,E)$. 

Our first result is the following extension of \cite[Th.~1.17]{GromovLawson83}:
\begin{theorem}\label{T:IsaD}
Suppose that  $D\colon C^\infty_c(M,E^+)\to C^\infty_c(M,E^-)$ satisfies \eqref{E:Istronglyelliptic} and the Riemannian metric $g^{TM}$ is complete. Then $\calD$ is essentially self-adjoint with initial domain $C^\infty_c(M,E)\ = \ C^\infty_c(M,E^+)\oplus C^\infty_c(M,E^-)$. 
\end{theorem}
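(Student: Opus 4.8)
The plan is to prove essential self-adjointness via the deficiency-index criterion. Since $\calD$ is formally self-adjoint, it is symmetric on the initial domain $C^\infty_c(M,E)$, and by von Neumann's theory it is essentially self-adjoint if and only if the deficiency spaces $\ker(\calD^\ast\mp i)$ vanish, where $\calD^\ast$ denotes the distributional (maximal) extension of $\calD$. Thus I would reduce the statement to the following claim: if $u\in L^2(M,E)$ satisfies $\calD^\ast u=\pm i\,u$ in the distributional sense, then $u=0$. Here the relevant inner product is the $\CC$-valued Hilbert-space inner product on $L^2(M,E)$ obtained by composing the $A$-valued fiberwise pairing with the trace $\tau$, so that the whole argument takes place in a genuine Hilbert space.

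The first analytic input is elliptic regularity in the $A$-Hilbert setting: because the principal symbol $\sigma(\calD)$ is injective and the bundles are of finite type, the operator $\calD\mp i$ is elliptic, and a distributional solution of $\calD^\ast u=\pm i\,u$ is in fact smooth. Consequently $\calD u=\pm i\,u$ holds classically and all integrations by parts below are justified. The second input is the completeness of $g^{TM}$: writing $\rho=\dist(p,\cdot)$ for a fixed basepoint $p$ and choosing a fixed $\chi\in C^\infty_c(\RR)$ with $\chi\equiv 1$ near $0$, I set $\phi_n:=\chi(\rho/n)$ (after a harmless smoothing of $\rho$). By Hopf--Rinow the balls $B(p,n)$ are relatively compact, so $\phi_n\in C^\infty_c(M)$, and since $\rho$ is $1$-Lipschitz one has $0\le\phi_n\le 1$, $\phi_n\to 1$ pointwise, and $\norm{d\phi_n}_\infty\le \norm{\chi'}_\infty/n\to 0$.

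The heart of the proof is then the Gromov--Lawson/Chernoff cutoff computation. The key algebraic fact is that for a first-order operator the commutator $[\calD,\phi]$ is the zeroth-order (fiberwise) operator $\sigma(\calD)(x,d\phi)$, whose operator norm is bounded by $c\,\abs{d\phi}$ thanks to the Assumption \eqref{E:Istronglyelliptic} (using $\norm{\sigma(D^\ast)}=\norm{\sigma(D)}$). Using the symmetry of $\calD$ and $\calD u=\pm i\,u$, pairing $\calD u$ against $\phi_n^2 u$ in two ways and subtracting yields an identity of the form $2\norm{\phi_n u}^2=\pm i\,\langle u,[\calD,\phi_n^2]u\rangle$. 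Estimating the right-hand side by $\norm{[\calD,\phi_n^2]}\le 2c\,\phi_n\abs{d\phi_n}$ gives $2\norm{\phi_n u}^2\le 2c\,\norm{d\phi_n}_\infty\,\norm{u}^2\le (2cC/n)\,\norm{u}^2\to 0$. On the other hand $\norm{\phi_n u}^2\to\norm{u}^2$ by dominated convergence, forcing $u=0$ and hence the vanishing of the deficiency spaces.

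I expect the main obstacle to be not the cutoff argument itself, which is formally identical to the finite-dimensional case, but the verification of the analytic foundations for $A$-Hilbert bundles of finite type: namely interior elliptic regularity and the distributional product rule in $\Dom(\calD^\ast)$ when the fibers are infinite-dimensional Hilbert $A$-modules. These should follow because ``finite type'' lets one reduce, in a local trivialization, to a differential operator with coefficients in the bounded operators of a fixed Hilbert module, so that Hilbert-space-valued Sobolev theory, parametrix construction, and Friedrichs mollification apply verbatim; once these are in place the completeness of the metric enters solely through the existence of the cutoffs $\phi_n$ with $\norm{d\phi_n}_\infty\to 0$.
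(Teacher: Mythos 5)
Your proposal is correct, but it reaches essential self-adjointness by a different functional-analytic route than the paper. You verify von Neumann's deficiency-index criterion, showing $\Ker(\calD^*\mp i)=0$ by pairing an eigensection $u$ against $\phi_n^2u$ and letting the commutator term vanish as $n\to\infty$; the paper instead proves directly that $\Dom(\calD_{\max})=\Dom(\calD_{\min})$, i.e.\ that for every $s$ in the maximal domain the truncations $\phi_k s$ converge to $s$ in graph norm. The two arguments consume exactly the same analytic inputs --- interior elliptic regularity in the Mi\v{s}\v{c}enko--Fomenko/BFKM Sobolev theory for finite-type $A$-Hilbert bundles (only $H^1_{loc}$ is needed, not the full smoothness you invoke, so that step can be weakened), cutoff functions with $\|d\phi_n\|_\infty\to0$ supplied by completeness, and the symbol bound \eqref{E:Istronglyelliptic} controlling $\|[\calD,\phi]\|\le c\,\abs{d\phi}$ --- and you correctly flag these as the points requiring care in the infinite-dimensional setting. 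The paper's version is marginally more economical: it avoids deficiency-index theory and directly yields the graph-norm density of compactly supported sections in $\Dom(\calD_{\max})$, which is the form of the statement reused later; yours is closer to the classical Chernoff/Gromov--Lawson eigensection argument. One small point to make explicit: the integration by parts $\langle\calD u,\phi_n^2u\rangle=\langle u,\calD(\phi_n^2u)\rangle$ is justified because $\phi_n^2u\in\Dom(\calD_{\min})$ by $H^1_{loc}$ regularity together with \eqref{E:Hk subset Dommin} (this is the paper's \eqref{E:phis in Dommin}), while $u\in\Dom(\calD_{\max})=\Dom\bigl((\calD_{\min})^*\bigr)$; with that noted, your argument is complete.
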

The proof is given is Section~\ref{S:prsaD}.

%\begin{remark}
%It follows from Theorem~\ref{T:IsaD} that the closure of the formal adjoint $D^+$ is equal to the adjoint  operator $D^*$.
%\end{remark}

%---------------
\subsection{A Schr\"odinger-type operator}\label{SS:IShr}
Consider the Schr\"odinger-type operator 
\begin{equation}\label{E:IHV}
       H_V \ = \ D^*D \ + \ V,
\end{equation}
where $V(x):E_x^+\to E_x^+$ is an $A$-linear self-adjoint bundle endomorphism. 

We view $H_V$ as an unbounded operator on $L^2(M,E^+)$ with initial domain $C^\infty_c(M,E^+)$ and give a sufficient condition for self-adjointness of this operator. For simplicity we assume that the potential $V$ is a measurable section which belongs to $L^\infty_{loc}$.\footnote{Much more general potentials were considered in \cite{BrMiSh02}. It would be interesting to extend the results we present in this paper to the type of potentials considered in \cite{BrMiSh02}.} 

Our next result is the following extension of \cite[Th.~2.7]{BrMiSh02} to $A$-Hilbert bundles:

%---------------------------
\begin{theorem}\label{T:Isa}
Suppose there exists a function $q\colon M\to\RR$ such that 

\begin{enumerate}
\item $V(x)\ge -q(x)\Id$ for all $x\in M$;
\item
$q\geq1$ and $q^{-1/2}$ is globally Lipschitz, i.e. there exists a constant $L>0$ such
that, for every $x_{1}, x_{2}\in M$,
\begin{equation}\label{E:ILipschitz}
       |q^{-1/2}(x_{1})-q^{-1/2}(x_{2})|\leq Ld(x_{1},x_{2}),
\end{equation}
where $d$ is the distance induced by the metric $g^{TM}$;

\item the metric $g:=q^{-1}g^{TM}$ on $M$ is complete.
\end{enumerate}
Then $H_V$ is essentially self-adjoint on $C^\infty_c(M,E^+)$.
\end{theorem}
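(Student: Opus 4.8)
The plan is to prove essential self-adjointness by the deficiency-index criterion, following the scheme of \cite[Th.~2.7]{BrMiSh02}. Since the fibers of $E^+$ are Hilbert spaces, $L^2(M,E^+)$ is an ordinary complex Hilbert space and $H_V$, with initial domain $C^\infty_c(M,E^+)$, is a densely defined symmetric operator (here only the self-adjointness of $V$ and the positivity of $D^\ast D$ enter; the $A$-module structure and the trace $\tau$ play no role in this argument). Because $V$ is bounded below only by $-q$ with $q$ unbounded, $H_V$ need not be semibounded, so I would not try to prove density of the range of $H_V+\lambda$ for large real $\lambda$; instead I would show that the deficiency spaces vanish, i.e. that $\Ker(H_V^\ast\mp i)=0$. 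As $D^\ast D$ is a second order elliptic operator (its symbol is $\sigma(D)^\ast\sigma(D)$, which is positive definite by \eqref{E:Istronglyelliptic}) and $V\in L^\infty_{loc}$, any $u\in L^2(M,E^+)$ solving $(D^\ast D+V)u=\pm i u$ distributionally lies in $H^2_{loc}$ by interior elliptic regularity, so $Du\in L^2_{loc}$ and all the integrations by parts below are justified.

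The geometric input comes from hypotheses (2) and (3). First I would record that the Lipschitz bound \eqref{E:ILipschitz} says $|d(q^{-1/2})|_{g^{TM}}\le L$ almost everywhere, so that $q^{-1/2}$ is an exhaustion-type function adapted to $g=q^{-1}g^{TM}$; combined with the completeness of $g$ in (3), this lets me construct a sequence of cutoffs $\chi_k\in C^\infty_c(M)$ with $0\le\chi_k\le1$, $\chi_k\to1$ pointwise, whose differentials obey $|d\chi_k|_{g^{TM}}\le\delta_k\,q^{1/2}$ with $\delta_k\to0$ and whose transition regions recede to infinity in the metric $g$. The symbol bound \eqref{E:Istronglyelliptic} then converts this into the operator-norm estimate $\|[D,\chi_k]\|=\|\sigma(D)(d\chi_k)\|\le c\,|d\chi_k|_{g^{TM}}\le c\,\delta_k\,q^{1/2}$; this is exactly the mechanism, as in the proof of Theorem~\ref{T:IsaD}, by which completeness of the relevant metric controls the first-order commutators.

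With $u\in\Ker(H_V^\ast-i)$ I would pair the equation $(D^\ast D+V)u=iu$ with $\chi_k^2u$ and split into real and imaginary parts, obtaining
\[
\|\chi_k Du\|^2+\langle V\chi_k u,\chi_k u\rangle=-2\,\RE\langle \chi_k Du,\sigma(D)(d\chi_k)u\rangle,
\]
and
\[
\|\chi_k u\|^2=2\,\mathrm{Im}\,\langle \chi_k Du,\sigma(D)(d\chi_k)u\rangle.
\]
The second identity shows that it suffices to make the right-hand flux term tend to $0$; since $\|\sigma(D)(d\chi_k)u\|\le c\,\delta_k\,\|q^{1/2}\chi_k u\|$ with $\delta_k\to0$, this reduces everything to the a priori bound that $Du\in L^2(M,E^+)$ and $q^{1/2}u\in L^2(M,E^+)$ (equivalently, that $\|\chi_k Du\|$ and $\|q^{1/2}\chi_k u\|$ stay bounded as $k\to\infty$). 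The first identity, together with the lower bound $\langle V\chi_k u,\chi_k u\rangle\ge-\|q^{1/2}\chi_k u\|^2$ coming from hypothesis (1), is what I would use to establish this coupled bound, absorbing the commutator via \eqref{E:Istronglyelliptic} and the smallness of $\delta_k$.

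The main obstacle is precisely this last coupled estimate: because the only control on the negative part of the potential is $V\ge -q$, the terms $\|\chi_k Du\|$ and $\|q^{1/2}\chi_k u\|$ are intertwined and neither is bounded a priori from $u\in L^2$ alone. Resolving this is where hypotheses (2) and (3) are indispensable: the completeness of $g=q^{-1}g^{TM}$ places the transition regions of the $\chi_k$ far out, where the flux can be controlled, while the Lipschitz condition \eqref{E:ILipschitz} forces $|d\chi_k|_{g^{TM}}$ to be comparable to $q^{1/2}$ rather than larger, so that the commutator terms are genuinely absorbed by the potential term. Once the coupled bound is in place, the imaginary-part identity gives $\|u\|^2=\lim_k\|\chi_k u\|^2=0$. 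Running the identical argument for $\Ker(H_V^\ast+i)$ shows both deficiency indices vanish, whence $H_V$ is essentially self-adjoint on $C^\infty_c(M,E^+)$.
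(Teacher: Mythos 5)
Your overall strategy (deficiency indices, cutoffs adapted to the complete metric $g=q^{-1}g^{TM}$, real/imaginary splitting of $\langle (D^*D+V)u,\chi_k^2u\rangle$) is a legitimate alternative to the paper's route, but the argument has a genuine gap at exactly the point you flag as "the main obstacle," and as written it cannot be closed. Your imaginary-part identity needs the flux term $\langle\chi_k Du,\sigma(D)(d\chi_k)u\rangle$ to vanish in the limit, and you propose to get this from the a priori bounds $Du\in L^2$ and $q^{1/2}u\in L^2$. But the real-part identity together with $V\ge -q$ only yields
\[
\|\chi_k Du\|^2\ \le\ \|q^{1/2}\chi_k u\|^2\ +\ (\text{commutator terms}),
\]
i.e.\ a one-sided bound on the \emph{difference} of the two quantities you need; it gives no upper bound on $\|q^{1/2}\chi_k u\|$ itself, so the loop never closes. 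Worse, the bounds you are after are false in general when $H_V$ is not semibounded: for $D=d/dx$ on $\RR$ and $V=-(x^2+1)$ (which satisfies all three hypotheses with $q=x^2+1$), elements $u$ of $\Dom(H_{V,0}^*)$ satisfy $\|u'\|^2-\|xu\|^2<\infty$ without either term being finite separately. So no correct argument can pass through "$Du\in L^2$ and $q^{1/2}u\in L^2$."

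The paper resolves this by proving only the strictly weaker \emph{weighted} estimate $q^{-1/2}Ds\in L^2$ for $s\in\Dom(H_{V,0}^*)$ (Proposition~\ref{P:domain}). The mechanism is to integrate by parts against $\psi^2$ with $0\le\psi\le q^{-1/2}$, so that $q\psi^2\le 1$ and the dangerous potential term is absorbed via $(\psi^2Vs,s)\ge-(q\psi^2s,s)\ge-\|s\|^2$ without ever needing $q^{1/2}s\in L^2$; the Lipschitz hypothesis \eqref{E:ILipschitz} enters to control $d(q^{-1/2})$ in the commutator. The self-adjointness is then deduced not from deficiency indices but by showing $H_{V,0}^*$ is symmetric: the boundary term $(\hatD(d\phi_k)s_1,Ds_2)-(Ds_1,\hatD(d\phi_k)s_2)$ is bounded by $\tfrac{c}{k}\|s_i\|\,\|q^{-1/2}Ds_j\|$ precisely because $|d\phi_k|\,q^{1/2}\le\tfrac1k$, which pairs the weight $q^{1/2}$ from the cutoff against the weight $q^{-1/2}$ from the Proposition. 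If you want to keep your deficiency-index framework, you would need to run your pairing against $(q^{-1}\chi_k^2)u$ (or otherwise build the weight $q^{-1/2}$ into the test section) and invoke an estimate of the type of Proposition~\ref{P:domain}; without that weighted a priori bound the proof does not go through.
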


The proof is given in Section~\ref{S:prsa}.

\begin{remark}
We say that a curve  $\gamma\colon [a,\infty)\to M$ {\em goes to infinity} if for any
compact set $K\subset M$ there exists $t_K>0$ such that $\gamma(t)\not\in K$, for all
$t\ge t_K$. Condition (3) of the theorem is equivalent to the condition that the integral $\int_\gamma\frac{ds}{\sqrt{q}}=\infty$ for every going to infinity curve $\gamma$.
\end{remark}

\begin{corollary}\label{C:Isaboudedbelow}
If $V(x)$ is bounded below, i.e. there exists a constant $b>0$ such that $V(x)\ge -b$, then the operator $H_V$ is essentially self-adjoint. 
\end{corollary}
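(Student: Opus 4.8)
The plan is to deduce Corollary \ref{C:Isaboudedbelow} directly from Theorem \ref{T:Isa} by exhibiting a function $q$ that satisfies the three hypotheses of that theorem. Since $V(x) \geq -b$ for a constant $b > 0$, the natural choice is a constant function $q \equiv q_0$ for a suitably large constant. The point is that a constant function trivially satisfies the Lipschitz condition, and scaling the complete metric by a positive constant preserves completeness.

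First I would set $q(x) := \max(b, 1)$, a positive constant. Then condition (1) holds because $V(x) \geq -b \geq -q(x)\,\Id \geq -\max(b,1)\,\Id$ for all $x \in M$ (using that $-b \geq -\max(b,1)$). Condition (2) holds since $q \geq 1$ by construction, and $q^{-1/2}$ is a constant function, hence globally Lipschitz with Lipschitz constant $L = 0$ (or any $L > 0$): the left-hand side of \eqref{E:ILipschitz} is identically zero. For condition (3), the metric $g = q^{-1} g^{TM} = \max(b,1)^{-1} g^{TM}$ is a positive constant multiple of the complete metric $g^{TM}$. Rescaling a complete Riemannian metric by a positive constant yields a complete metric, because the induced distance function is merely multiplied by the same positive scalar, so Cauchy sequences and metric completeness are unaffected. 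Hence $g$ is complete.

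With all three hypotheses of Theorem \ref{T:Isa} verified for this choice of $q$, we conclude that $H_V$ is essentially self-adjoint on $C^\infty_c(M,E^+)$, which is exactly the assertion of the corollary.

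I do not expect any genuine obstacle here; the corollary is essentially a sanity check that the more flexible hypothesis of Theorem \ref{T:Isa} specializes correctly to the classical bounded-below case. The only point requiring a word of justification is the completeness of the rescaled metric $g = q^{-1} g^{TM}$, which follows from the elementary observation that multiplying a complete metric by a positive constant scales all path lengths, and hence the distance function, by that constant, preserving completeness.
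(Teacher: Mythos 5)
Your proposal is correct and matches the paper's intended argument: the corollary is meant to follow from Theorem~\ref{T:Isa} by taking $q$ to be a suitable constant (the paper itself invokes exactly this, ``$q=\operatorname{const}$'', in the proof of Lemma~\ref{L:restriction}). Your verification of the three hypotheses, including the completeness of the constant rescaling $g=q^{-1}g^{TM}$, is exactly what is needed.
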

In the rest of this paper we denote by $H_V$ both the operator and its self-adjoint closure.

%----------------------------
\subsection{The spectral counting function of a Schr\"odinger-type operator}\label{SS:INlambda}
The  trace $\tau:A\to \CC$ on $A$ extends to a (possibly infinite) trace $\Tr_\tau$ on the space of bounded $A$-linear operators acting on $L^2(M,E^\pm)$. 

For each $\lambda\in \RR$ let $P_\lambda$ denote the orthogonal  projection onto the spectral subspace of the operator $H_V$ corresponding to the ray $(-\infty, \lambda]$ and define the {\em spectral counting function}
\begin{equation}\label{E:IN(lambda)}
	N_\tau(\lambda;H_V)\ := \ \Tr_\tau P_\lambda.
\end{equation}

%---
\begin{theorem}\label{T:IN<infty}
Suppose there exist a compact $K\subset M$ and a constant $C$ such that 
\begin{equation}\label{E:IFredholmcondition}
	V(x)\ \ge \ C, \qquad \text{for all}\quad x\not\in K.
\end{equation}
Then 
\begin{equation}\label{E:IN<infty}
	N_\tau(\lambda;H_V)\ \ < \ \infty, \qquad \text{for all}\quad \lambda< C.
\end{equation}
\end{theorem}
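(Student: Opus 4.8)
The plan is to compare $H_V$ with an auxiliary operator whose spectrum lies entirely in $[C,\infty)$ and which differs from $H_V$ only by a compactly supported, hence relatively $\tau$-compact, perturbation; the finiteness of $N_\tau(\lambda;H_V)$ for $\lambda<C$ then follows from the stability of the $\tau$-essential spectrum under such perturbations.

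First I would observe that $H_V$ is bounded below. Since $V\in L^\infty_{loc}$ and $K$ is compact, $V$ is bounded on $K$, so together with \eqref{E:IFredholmcondition} there is a constant $b>0$ with $V(x)\ge -b\,\Id$ for all $x\in M$; in particular $V$ is bounded below, $H_V$ is essentially self-adjoint by Corollary~\ref{C:Isaboudedbelow}, and $H_V=D^*D+V\ge -b$. Next, fix a smooth cutoff $\chi$ with $0\le\chi\le1$, $\chi\equiv1$ on a neighborhood of $K$, and $\supp\chi$ compact, and set $B:=(C+b)\chi$ and $\tilde{V}:=V+B$. Then $B$ is a bounded, compactly supported, nonnegative $A$-linear endomorphism, and by construction $\tilde{V}\ge C$ everywhere: on $K$ because $V\ge -b$ and $\chi\equiv1$, off $K$ because $V\ge C$ and $B\ge0$. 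Hence the auxiliary operator $\tilde{H}:=D^*D+\tilde{V}=H_V+B$ satisfies $\tilde{H}\ge C$, so $N_\tau(\lambda;\tilde{H})=0$ for all $\lambda<C$.

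The heart of the argument is to show that $B$ is relatively $\tau$-compact with respect to $\tilde{H}$, i.e.\ that $B^{1/2}(\tilde{H}+1)^{-1}B^{1/2}$ (equivalently $B(\tilde{H}+1)^{-1}$) is $\tau$-compact. Writing $B^{1/2}=\sqrt{C+b}\,\chi^{1/2}$, this reduces to the $\tau$-compactness of $\chi^{1/2}(\tilde{H}+1)^{-1/2}$. Since $\sigma(D)$ is injective, $D^*D$, and hence $\tilde{H}$, is $A$-elliptic of order two, so by a Gårding-type estimate $(\tilde{H}+1)^{-1/2}$ maps $L^2(M,E^+)$ continuously into the local Sobolev space of order one; multiplication by the compactly supported $\chi^{1/2}$ then confines the range to sections supported in the fixed compact set $\supp\chi$. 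The required fact is the von Neumann (Rellich-type) compactness theorem: over a relatively compact region the inclusion of the order-one Sobolev space into $L^2$ is $\tau$-compact, i.e.\ the $\tau$-traces of its high spectral projections are finite. This is the main obstacle, as it is where the finiteness of $\tau$ on $A$ and the finite-type hypothesis on the bundle are genuinely used; all the infinite dimensionality of the fibers is absorbed here, and care is needed because the embedding is not compact in the usual sense but only $\tau$-compact. Granting it, $\chi^{1/2}(\tilde{H}+1)^{-1/2}$ is $\tau$-compact, and since products of $\tau$-compact operators with bounded $A$-linear operators are again $\tau$-compact, so are $B^{1/2}(\tilde{H}+1)^{-1}B^{1/2}$ and, for $\lambda<C$, the Birman--Schwinger operator $K_\lambda:=B^{1/2}(\tilde{H}-\lambda)^{-1}B^{1/2}$ (note $\tilde{H}-\lambda\ge C-\lambda>0$ is boundedly invertible).

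Finally I would conclude by the Birman--Schwinger principle in its semifinite (trace) form: for $\lambda<C$,
\[
    N_\tau(\lambda;H_V)\ =\ \Tr_\tau\,\chi_{[1,\infty)}(K_\lambda),
\]
since a spectral value of $H_V=\tilde{H}-B$ in $(-\infty,\lambda]$ corresponds to $K_\lambda$ having a spectral value $\ge1$. As $K_\lambda\ge0$ is $\tau$-compact, its spectral projection $\chi_{[1,\infty)}(K_\lambda)=\chi_{[1,\|K_\lambda\|]}(K_\lambda)$ has finite $\tau$-trace, giving $N_\tau(\lambda;H_V)<\infty$; for this conclusion only the $\tau$-compactness of $K_\lambda$ is needed, so the precise convention at the endpoint is immaterial. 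Equivalently, one may phrase the last two steps as the $\tau$-analogue of Weyl's theorem: a relatively $\tau$-compact perturbation leaves the $\tau$-essential spectrum unchanged, so $H_V$ and $\tilde{H}$ share the same $\tau$-essential spectrum, which is contained in $[C,\infty)$; this is exactly the statement that $\Tr_\tau P_\lambda<\infty$ for every $\lambda<C$. The strict inequality $\lambda<C$ is essential, as it guarantees both the invertibility of $\tilde{H}-\lambda$ and the separation of $\lambda$ from the $\tau$-essential spectrum.
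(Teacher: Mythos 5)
Your proposal is correct in outline, but it takes a genuinely different route from the paper. The paper argues directly with the variational principle for $N_\tau$ (Lemma~\ref{L:variational principle}): given a subspace $L\subset\Dom(H_V)$ on which $(H_Vs,s)\le\lambda(s,s)$, it shows that the cutoff map $s\mapsto\phi s$ is bounded below on $L$ (so $\dim_\tau L=\dim_\tau\rho(L)$), transplants $\rho(L)$ into a closed manifold $\widehat M\supset\Omega_C$ carrying an extension $H_{\widehat V}$ of the operator, verifies $(H_{\widehat V}u,u)\le R(u,u)$ on $\rho(L)$, and then invokes the finiteness of the spectral counting function of an elliptic operator on a \emph{closed} manifold (from \cite{BFKM}) to bound $\dim_\tau L$ uniformly. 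You instead absorb the potential well into a compactly supported nonnegative perturbation $B$, so that $\tilde H=H_V+B\ge C$, and conclude via relative $\tau$-compactness of $B$ plus a semifinite Birman--Schwinger/Weyl argument. Both approaches localize to a compact set and both ultimately rest on the same external input, namely a compactness statement for finite-type $A$-Hilbert bundles over compact regions: the paper uses it in the form ``$N_\tau(\lambda;\widehat D^*\widehat D)<\infty$ on a closed manifold,'' you use it in the form of a $\tau$-Rellich embedding $H^1_0(\Omega)\hookrightarrow L^2$, and these are essentially equivalent. What your route buys is a stronger conceptual conclusion --- the $\tau$-essential spectrum of $H_V$ is contained in $[C,\infty)$ --- and it matches the classical ($A=\CC$) treatment of such results. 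What it costs is machinery the paper never develops: the ideal of $\tau$-compact operators in the commutant, the fact that $\Tr_\tau\chi_{[\epsilon,\infty)}(K)<\infty$ for positive $\tau$-compact $K$, and the semifinite Birman--Schwinger inequality $N_\tau(\lambda;H_V)\le\Tr_\tau\chi_{[1,\infty)}(K_\lambda)$; note that the cleanest proof of this last inequality in the von Neumann setting is itself the variational principle of Lemma~\ref{L:variational principle}, so you do not really avoid that lemma, you only hide it. Two small points to tidy up: replace $\chi^{1/2}$ by a smooth cutoff (take $\chi$ to be the square of a smooth function, since $\chi^{1/2}$ need not be smooth where $\chi$ vanishes), and note that $V$ is only locally bounded, so the local Gårding estimate must be run through $\|Ds\|^2\le(\tilde Hs,s)$ (valid since $\tilde V\ge 0$) rather than through any global bound on $\tilde V$.
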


\begin{remark}
Inequality \eqref{E:IFredholmcondition} implies that $V(x)$ is bounded below and, hence, the operator $H_V$ is self-adjoint by Corollary~\ref{C:Isaboudedbelow}.
\end{remark}

%-------
%\begin{remark}
%The operator whose spectral counting function satisfies \eqref{E:IFredholm} is called {\em $\tau$-%Fredholm}, cf. \cite{??}. Thus Theorem~\ref{T:IFredholm} gives a sufficient condition for $\tau$-%Fredholmness of $H_V$. 
%\end{remark}

%----------------------------
\subsection{$\tau$-Fredholmness}\label{SS:IFredholm}
We first recall the notion of Fredholmness in the von Neumann setting, cf. \cite[Section 3]{Breuer68Fredholm1}. 
Let $H_1$ and $H_2$ be $A$-Hilbert spaces and $T:\Dom(T)\subset H_1\rightarrow H_2$ be a closed $A$-operator.

%%%
\begin{definition}\label{D:Fredholm operator}
We say that the operator $T$ is \emph{$\tau$-Fredholm} if the following two conditions are satisfied:
		\begin{itemize}
			\item[(i)] $\Ker T$ has finite $\tau$-dimension;
			\item[(ii)] there exists an $A$-Hilbert subspace $L$ of $H_2$ such that $L\subset\im T$ and $\dim_\tau L^\perp<\infty$.
		\end{itemize}
\end{definition}

%---
\begin{remark}
If $H_1$ and $H_2$ are Hilbert spaces over $\CC$, an operator $T:H_1\to H_2$ is called Fredholm if its kernel and cokernel are finite dimensional. Equivalently, this means that the  kernels of $T$ and $T^*$ are finite dimensional and the image of $T$ is closed. The image of a  $\tau$-Fredholm operator does not need to be closed. Moreover, the image of $T$ is not in general  an $A$-Hilbert space and the $\tau$-dimension of the cokernel of $T$ is not defined. Because of this we need to replace the condition of finite-dimensionality of the cokernel by Condition (ii) of Definition~\ref{D:Fredholm operator}. 
\end{remark}

%%%
Consider the operator 
\begin{equation}\label{E:Op1}	
	\mathcal{T}\ =\ \begin{pmatrix}0&T^*\\T&0\end{pmatrix}:\,H_1\oplus H_2\ \longrightarrow \ H_1\oplus H_2.
\end{equation}

%----
\begin{lemma}\label{L:IFredholmness}
The operator $T$ is $\tau$-Fredholm if and only if there exists $\lambda>0$ such that 
\begin{equation}\label{E:NtaucalT}
	N_\tau(\lambda;\calT^2) \ < \ \infty.
\end{equation}
\end{lemma}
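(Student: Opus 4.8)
The plan is to exploit the block structure of $\calT$. Since $T$ is closed and densely defined, $\calT$ is self-adjoint on $\Dom(T)\oplus\Dom(T^*)$ and
\[
\calT^2=\begin{pmatrix}T^*T&0\\0&TT^*\end{pmatrix},
\]
so $\calT^2$ is the orthogonal direct sum of the nonnegative self-adjoint operators $T^*T$ on $H_1$ and $TT^*$ on $H_2$. The spectral projection of $\calT^2$ onto $[0,\lambda]$ then splits, giving $N_\tau(\lambda;\calT^2)=N_\tau(\lambda;T^*T)+N_\tau(\lambda;TT^*)$, and I would reduce the whole statement to the low-lying spectral subspaces of $T^*T$ and $TT^*$. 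Two standard facts from the $\tau$-dimension theory of Section~\ref{S:A-Hilbert bundles} will be used: the polar decomposition $T=U|T|$, whose partial isometry restricts to an $A$-linear isometry of $(\Ker T)^\perp$ onto $\overline{\im T}$ and intertwines the spectral projections of $T^*T$ and $TT^*$ over $(0,\infty)$, so that the $\tau$-dimensions of the ranges of the projections of $T^*T$ and of $TT^*$ onto any interval $(0,\lambda]$ coincide; and the comparison (Kaplansky) formula $\Tr_\tau(e)-\Tr_\tau(e\wedge f)=\Tr_\tau(e\vee f)-\Tr_\tau(f)\le\Tr_\tau(f^\perp)$ for projections $e,f$.

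For the ``if'' direction I would assume $N_\tau(\lambda;\calT^2)<\infty$ for some $\lambda>0$, so both summands are finite. Finiteness of $N_\tau(\lambda;T^*T)$ yields $\dim_\tau\Ker T=\dim_\tau\Ker(T^*T)<\infty$, which is condition (i). For (ii) I would take $L:=\im\bigl(P_{(\lambda,\infty)}(TT^*)\bigr)$, the spectral subspace on which $TT^*>\lambda$: for $y\in L$ the element $z:=g(TT^*)y$ with $g(t)=t^{-1}$ on $(\lambda,\infty)$ and $0$ otherwise lies in $\Dom(TT^*)$ and satisfies $T(T^*z)=TT^*z=y$, so $L\subset\im T$, while $L^\perp=\im\bigl(P_{[0,\lambda]}(TT^*)\bigr)$ has $\dim_\tau L^\perp=N_\tau(\lambda;TT^*)<\infty$. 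Being the range of an $A$-linear projection, $L$ is an $A$-Hilbert subspace, so $T$ is $\tau$-Fredholm.

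For the ``only if'' direction I would assume $T$ is $\tau$-Fredholm and produce a spectral gap. Condition (i) gives $\dim_\tau\Ker T<\infty$, and since $L\subset\overline{\im T}$ forces $\Ker T^*=(\overline{\im T})^\perp\subset L^\perp$, monotonicity gives $\dim_\tau\Ker T^*\le\dim_\tau L^\perp<\infty$. The key step is to bound the spectral subspace of $TT^*$ near $0$. Because $L\subset\im T$, each $y\in L$ has a unique preimage $Sy\in(\Ker T)^\perp\cap\Dom(T)$ with $T(Sy)=y$; the resulting map $S\colon L\to H_1$ is closed (as $T$ is closed and $(\Ker T)^\perp$ is closed) and everywhere defined on the Hilbert space $L$, hence bounded by the closed graph theorem, say $\|S\|\le C$. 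Fixing $0<\epsilon<C^{-2}$ and letting $e$ be the spectral projection of $TT^*$ onto $(0,\epsilon]$ and $f$ the projection onto $L$, any $y\in\im e\cap L$ satisfies $\|T^*y\|^2=\langle TT^*y,y\rangle\le\epsilon\|y\|^2$ and $y=T(Sy)$ with $\|Sy\|\le C\|y\|$, so
\[
\|y\|^2=\langle T(Sy),y\rangle=\langle Sy,T^*y\rangle\le\|Sy\|\,\|T^*y\|\le C\sqrt{\epsilon}\,\|y\|^2<\|y\|^2,
\]
forcing $y=0$. Thus $e\wedge f=0$, and the comparison formula gives $\Tr_\tau(e)\le\Tr_\tau(f^\perp)=\dim_\tau L^\perp<\infty$; by the polar-decomposition identity the same holds for $T^*T$. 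Combining everything, $N_\tau(\epsilon;\calT^2)=\dim_\tau\Ker T+\dim_\tau\Ker T^*+2\,\dim_\tau\im\bigl(P_{(0,\epsilon]}(TT^*)\bigr)<\infty$.

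I expect the main obstacle to be the spectral-gap estimate in the ``only if'' direction, namely converting the algebraic inclusion $L\subset\im T$ with $\dim_\tau L^\perp<\infty$ into a uniform lower bound for $TT^*$ off its kernel. The closed graph theorem supplies the bounded right inverse $S$, and the elementary pairing inequality together with the Kaplansky comparison of $\tau$-dimensions then delivers finiteness of the low spectral subspace; the only care needed is in checking the domain conditions that legitimize the adjoint identity $\langle T(Sy),y\rangle=\langle Sy,T^*y\rangle$ and the functional-calculus manipulations.
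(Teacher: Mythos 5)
Your proof is correct, and it reaches the paper's conclusion by a route whose scaffolding differs from the one used there. The paper first replaces $\calT$ by its bounded transform $\Phi(\calT)=\calT(I+\calT^2)^{-1/2}$ and reduces the whole lemma to a statement about a bounded self-adjoint $A$-linear operator $S$; for that $S$, the ``if'' direction takes $L=\im(I-P_\lambda(S^2))$ (the bounded-picture analogue of your spectral subspace), and the ``only if'' direction inverts $S$ on $S_1^{-1}(L)\subset(\Ker S)^\perp$ via the open mapping theorem and then compares $\tau$-dimensions using L\"uck's injectivity criterion for $\dim_\tau$. You instead work directly with the unbounded $T$: you use the block-diagonal form $\calT^2=T^*T\oplus TT^*$, the polar decomposition to identify the nonzero spectral data of $T^*T$ and $TT^*$, the closed graph theorem to produce the bounded right inverse $S$ of $T$ on $L$ (the exact counterpart of the paper's open-mapping step), and the Kaplansky comparison of projections in place of L\"uck's theorem. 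The two arguments share the same skeleton --- the spectral subspace of $TT^*$ above $\lambda$ sits inside $\im T$, and a bounded inverse over $L$ forces a spectral gap up to a subspace of finite $\tau$-codimension --- but yours avoids the auxiliary bounded-operator lemma at the cost of a few domain verifications, which you handle correctly ($g(TT^*)y\in\Dom(TT^*)$, the pairing $\langle T(Sy),y\rangle=\langle Sy,T^*y\rangle$, $\im e\subset\Dom(TT^*)$). One cosmetic caution: since $\Tr_\tau(\Id_{H_2})$ may be infinite here, the identity $\Tr_\tau(e)-\Tr_\tau(e\wedge f)=\Tr_\tau(e\vee f)-\Tr_\tau(f)$ should be read in the subtraction-free form $\Tr_\tau(e-e\wedge f)=\Tr_\tau(e\vee f-f)\le\Tr_\tau(f^\perp)$ to avoid an $\infty-\infty$; with $e\wedge f=0$ this still yields $\Tr_\tau(e)\le\dim_\tau L^\perp$, so nothing in your argument breaks.
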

The lemma is proven in Section~\ref{SS:prIFredholmness}.

\begin{remark}\label{R:NtaucalT}
The inequality \eqref{E:NtaucalT} is equivalent to 
\[
	N_\tau(\lambda;T^*T)<\infty, \quad \text{and}\quad N_\tau(\lambda;TT^*)<\infty. 
\]
\end{remark}
\begin{remark} The condition \eqref{E:NtaucalT} is often taken as a definition of $\tau$-Fredholmness, cf. for example \cite{Luck02book}*{\S2.1.1}.
\end{remark}
%----------------------------
\subsection{$\tau$-Fredholmness of a Callias-type operator}\label{SS:ICalliasFredholm}
Let $D$ be as in Section~\ref{SS:IDir} and let $F:E^+\to E^-$ be an $A$-linear bundle map. We set 
\[
	\calF\ :=\ \begin{pmatrix}
	0&F^*\\F&0
	\end{pmatrix}
\]
and consider the operator 
\begin{equation}\label{E:IDF}
	\calD_\calF\ := \ \calD+\calF\ = \ \begin{pmatrix}
	0&D^*+F^*\\D+F&0
	\end{pmatrix}.
\end{equation}
This operator satisfies the conditions of Theorem~\ref{T:IsaD} and, hence, is essentially self-adjoint.
Its self-adjoint closure is denoted by the same symbol $\calD_\calF$.

Let 
\[
	\{\calD,\calF\}\ := \ \calD\circ \calF\ +\ \calF\circ \calD \ = \ \begin{pmatrix}
	D^*F+F^*D&0\\0&DF^*+FD^*
	\end{pmatrix}
\]
denote the {\em anti-commutator} of $\calD$ and $\calF$. We also set $D_F:=D+F$.

\begin{definition}\label{D:Calliastype}
We say that the operators   $D_F$  and $\calD_\calF$ are of {\em Callias-type} if 
\begin{enumerate}
\item the leading symbol $\sigma(\calD)$ anti-commutes with $\calF$ so that the anti-commutator $\{\calD,\calF\}$ is an $A$-linear bundle endomorphism of $E=E^+\oplus{}E^-$; 
\item there exist a compact set $K\subset M$ and $\epsilon>0$ such that 
\begin{equation}\label{E:Calliastype}
	\calF^2(x)\geq\left\|\left\{\calD,\calF\right\}(x)\right\|+\epsilon
\end{equation}
for all $x\in M\backslash K$. 
\end{enumerate}
Here $\left\|\left\{\calD,\calF\right\}(x)\right\|$ denotes the norm of the A-linear map $\left\{\calD,\calF\right\}(x):E_x\to E_x$.
\end{definition}

In Section~\ref{S:Fredholm} we get the following corollary of Theorem~\ref{T:IN<infty}:
\begin{theorem}\label{T:IFredholm}
A Callias-type operator $D_F$ is $\tau$-Fredholm. 
\end{theorem}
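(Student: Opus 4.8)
The plan is to deduce Theorem~\ref{T:IFredholm} from Theorem~\ref{T:IN<infty} via the Fredholmness criterion in Lemma~\ref{L:IFredholmness}. By that lemma, $D_F$ is $\tau$-Fredholm if and only if there exists $\lambda>0$ such that $N_\tau(\lambda;\calD_\calF^2)<\infty$. So the entire task reduces to producing a spectral gap for $\calD_\calF^2$ near zero, and the natural mechanism for this is to recognize $\calD_\calF^2$ as a Schr\"odinger-type operator $H_V$ to which Theorem~\ref{T:IN<infty} applies.

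First I would compute $\calD_\calF^2$ explicitly. Since $\calD_\calF=\calD+\calF$ and both $\calD$ and $\calF$ are self-adjoint (formally), we have
\begin{equation*}
	\calD_\calF^2\ =\ \calD^2\ +\ \{\calD,\calF\}\ +\ \calF^2.
\end{equation*}
The operator $\calD^2$ is of the form $\begin{pmatrix}D^*D&0\\0&DD^*\end{pmatrix}$, which is a second-order operator of the type $D^*D$ appearing in $H_V$. The remaining terms $\{\calD,\calF\}+\calF^2$ constitute a zeroth-order, $A$-linear, self-adjoint bundle endomorphism of $E$ — here I use Condition~(1) of Definition~\ref{D:Calliastype}, which guarantees precisely that $\{\calD,\calF\}$ is a genuine bundle endomorphism rather than a first-order term. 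Thus, setting $V:=\{\calD,\calF\}+\calF^2$, the operator $\calD_\calF^2$ is exactly a generalized Schr\"odinger operator $H_V$ acting on $L^2(M,E)$.

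Next I would verify the hypothesis \eqref{E:IFredholmcondition} of Theorem~\ref{T:IN<infty} for this $V$. Condition~(2) of Definition~\ref{D:Calliastype} gives, for $x\in M\backslash K$,
\begin{equation*}
	V(x)\ =\ \calF^2(x)+\{\calD,\calF\}(x)\ \ge\ \calF^2(x)-\bigl\|\{\calD,\calF\}(x)\bigr\|\ \ge\ \epsilon,
\end{equation*}
since $\{\calD,\calF\}(x)\ge -\|\{\calD,\calF\}(x)\|\,\Id$ as a self-adjoint endomorphism. Hence $V(x)\ge\epsilon$ outside the compact set $K$, which is exactly \eqref{E:IFredholmcondition} with $C=\epsilon$. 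Before invoking Theorem~\ref{T:IN<infty} I must also confirm its standing hypotheses: $\calD_\calF$ satisfies \eqref{E:Istronglyelliptic} (noted in the text after \eqref{E:IDF}) and is essentially self-adjoint, and $V$ is bounded below so that $H_V=\calD_\calF^2$ is self-adjoint by Corollary~\ref{C:Isaboudedbelow}. Applying Theorem~\ref{T:IN<infty} then yields $N_\tau(\lambda;\calD_\calF^2)<\infty$ for all $\lambda<\epsilon$; choosing any $\lambda$ with $0<\lambda<\epsilon$ gives the inequality \eqref{E:NtaucalT}, and Lemma~\ref{L:IFredholmness} finishes the proof.

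The main obstacle I anticipate is not the spectral estimate itself, which is essentially bookkeeping once the reduction is made, but rather the careful verification that $V=\{\calD,\calF\}+\calF^2$ is a bona fide zeroth-order operator so that $\calD_\calF^2$ genuinely fits the template $D^*D+V$ required by Theorem~\ref{T:IN<infty}. This is where Condition~(1) of Definition~\ref{D:Calliastype} is indispensable: without the anti-commutation of $\sigma(\calD)$ with $\calF$, the cross term $\{\calD,\calF\}$ would carry a surviving first-order part and the identification with $H_V$ would fail. A secondary technical point is ensuring the graded/matrix structure is handled correctly — $\calD_\calF^2$ acts on all of $E=E^+\oplus E^-$ rather than on a single summand — but this is compatible with the setup of Theorem~\ref{T:IN<infty}, which is stated for operators on $L^2(M,E^+)$ and applies verbatim with $E$ in place of $E^+$.
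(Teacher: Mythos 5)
Your proposal is correct and follows essentially the same route as the paper: square $\calD_\calF$, use Condition (1) of Definition~\ref{D:Calliastype} to identify $\calD_\calF^2=\calD^2+V$ with $V=\{\calD,\calF\}+\calF^2$ a bundle endomorphism, use Condition (2) to get $V\ge\epsilon$ off the compact set, and then combine Theorem~\ref{T:IN<infty} with Lemma~\ref{L:IFredholmness}. Your additional remarks (the pointwise bound $\{\calD,\calF\}(x)\ge-\|\{\calD,\calF\}(x)\|\,\Id$ and the observation that the theorem applies with $E$ in place of $E^+$) only make explicit what the paper leaves implicit.
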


%-------------------
\subsection{A Callias-type index}\label{SS:ICallias}
Suppose now that $D_F$ is an operator of Callias-type. By Theorem ~\ref{T:IFredholm},  $\dim_\tau\Ker D_F$ and $\dim_\tau\Ker D_F^*$ are finite. Hence, we can define the $\tau$-index of $D_F$ by 
\begin{equation}\label{E:ICallias index}
	\ind_\tau D_F\ := \ \dim_\tau \Ker D_F\ - \ \dim_\tau\Ker D_F^*.
\end{equation}
For operators acting on sections of  finite-dimensional bundles (i.e. when the von Neumann algebra $A=\CC$) this index was introduced by C.~ Callias \cite{Callias78} and was extended and studied in many papers including \cite{Bunke95,BottSeeley78,Anghel93Callias}. We refer to \eqref{E:ICallias index} as the {\em Callias-type $\tau$-index} of the pair $(D,F)$. 

In Section~\ref{S:Callias} we prove the following stability property of the Callias-type $\tau$-index:
\begin{theorem}\label{T:ICallias}
Let $F_0$ and $F_1$ be two $A$-linear bundle maps $E^+\to E^-$ satisfying conditions $(1)$ and $(2)$ of Definition~\ref{D:Calliastype}. 
If there exists a compact set $K\subset M$ such that $F_0(x)=F_1(x)$ for all $x\not\in K$, then 
\begin{equation}\label{E:ICallias stability}
	\ind_\tau D_{F_0}\ = \ \ind_\tau D_{F_1}.
\end{equation}
\end{theorem}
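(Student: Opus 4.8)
The plan is to connect $F_0$ and $F_1$ by a straight-line homotopy and argue that the $\tau$-index is constant along this path, using the fact that the difference $F_1-F_0$ is compactly supported so that the family of Callias-type operators differs from a fixed $\tau$-Fredholm operator by a compact (or relatively compact) perturbation. Concretely, I would set $F_s:=(1-s)F_0+sF_1$ for $s\in[0,1]$ and let $D_{F_s}:=D+F_s$. The first step is to verify that each $F_s$ still satisfies conditions $(1)$ and $(2)$ of Definition~\ref{D:Calliastype}, so that each $D_{F_s}$ is $\tau$-Fredholm by Theorem~\ref{T:IFredholm} and $\ind_\tau D_{F_s}$ is well defined. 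Condition $(1)$ is immediate since anticommuting with $\sigma(\calD)$ is a linear condition on $\calF$, so $\sigma(\calD)$ anticommutes with each $\calF_s$. Condition $(2)$ requires more care: since $F_0=F_1$ outside the compact set $K$, we have $F_s=F_0=F_1$ outside $K$ for every $s$, so the estimate \eqref{E:Calliastype} holds on $M\setminus K$ simultaneously for all $s$ with the same $\epsilon$; one should enlarge the compact set in Definition~\ref{D:Calliastype} to a common $K'\supset K$ covering the (compactly supported) failure region, which is harmless.

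The heart of the argument is to show that $s\mapsto\ind_\tau D_{F_s}$ is locally constant, hence constant. The key observation is that the self-adjoint operator $\calD_{\calF_s}$ has the form $\calD+\calF_s$, and the associated ``square'' $\calD_{\calF_s}^2$ differs from $\calD_{\calF_0}^2$ by an operator supported in $K'$, because $\calF_s-\calF_0=s(\calF_1-\calF_0)$ vanishes outside $K'$. By Lemma~\ref{L:IFredholmness} (together with Remark~\ref{R:NtaucalT}), the $\tau$-index is computed from the spectral projection of $\calD_{\calF_s}^2$ near zero, and the $\tau$-dimensions of the kernels enter through $N_\tau(\lambda;\calD_{\calF_s}^2)$. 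I would exploit the McKean--Singer-type identity $\ind_\tau D_{F_s}=\Tr_\tau\bigl(e^{-t\calD_{\calF_s}^2}\big|_{E^+}\bigr)-\Tr_\tau\bigl(e^{-t\calD_{\calF_s}^2}\big|_{E^-}\bigr)$, valid for $\tau$-Fredholm operators once the relevant heat operators are $\tau$-trace class, and show that the right-hand side is independent of $s$. Differentiating in $s$, the derivative reduces to the $\tau$-trace of a graded commutator times a compactly supported multiplication operator $\dot{\calF}_s=\calF_1-\calF_0$, and the graded trace of such a supercommutator vanishes.

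The main obstacle I anticipate is justifying the analytic details that make the heat-kernel/supertrace argument rigorous in the von Neumann setting over a \emph{non-compact} manifold: one must show that the heat operators $e^{-t\calD_{\calF_s}^2}$ are $\tau$-trace class (or at least that the graded difference is), that the $\tau$-trace is cyclic on the relevant class of operators so that $\Tr_\tau[\calD_{\calF_s},\,\cdot\,]=0$, and that differentiation in $s$ commutes with $\Tr_\tau$. The compact support of $\dot{\calF}_s$ is exactly what controls these estimates, confining the $s$-dependence to a compact region where finite-propagation-speed and local elliptic estimates apply. An alternative that sidesteps the heat-kernel machinery is to use the polar/unitary-invariance description of the $\tau$-index together with a continuity-of-spectral-projection argument: since $\calD_{\calF_s}^2\ge\epsilon$ outside $K'$ uniformly in $s$, the spectral projection $P_{[0,\lambda]}(\calD_{\calF_s}^2)$ for small $\lambda$ has finite $\tau$-trace depending norm-continuously on $s$, and an integer-valued $\tau$-index varying continuously must be constant. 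I expect the straight-line homotopy combined with this spectral-continuity argument to be the cleanest route, with the trace-class and cyclicity verifications being the technical core.
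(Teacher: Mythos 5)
Your setup matches the paper's: the straight-line homotopy $F_s=(1-s)F_0+sF_1$, the observation that $F_s=F_0$ outside $K$ so each $F_s$ satisfies \eqref{E:Calliastype} with the same $\epsilon$, and hence each $D_{F_s}$ is $\tau$-Fredholm by Theorem~\ref{T:IFredholm}. But the heart of your argument --- why $s\mapsto\ind_\tau D_{F_s}$ is constant --- has genuine gaps on both routes you propose. The McKean--Singer route founders because the heat operators $e^{-t\calD_{\calF_s}^2}$ are \emph{not} $\tau$-trace class here: Theorem~\ref{T:IN<infty} only gives $N_\tau(\lambda;\calD_{\calF_s}^2)<\infty$ for $\lambda<\epsilon$, and above that threshold the spectral counting function is typically infinite on a non-compact manifold, so $\Tr_\tau e^{-t\calD_{\calF_s}^2}=\int e^{-t\lambda}\,dN_\tau(\lambda)$ diverges. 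One would have to set up a \emph{relative} supertrace and control off-diagonal heat kernels, which is a substantial theory you have not supplied. Your fallback route is worse: the $\tau$-index is \emph{not} integer-valued in general (for a II$_1$ factor, $\dim_\tau$ takes arbitrary real values), so ``a continuously varying integer must be constant'' is not available; moreover, the spectral projections $P_{[0,\lambda]}(\calD_{\calF_s}^2)$ need not vary continuously in $s$, and $\dim_\tau\Ker D_{F_s}$ is not a continuous function of $s$ (only the index is stable, and that is precisely what must be proved).

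The missing idea, which is the paper's actual mechanism, is an abstract stability theorem for the Breuer $\tau$-index: since $F_1-F_0$ is a \emph{bounded} bundle map, all $D_{F_s}$ have the same domain and $\|D_{F_s}-D_{F_t}\|=|s-t|\,\|F_1-F_0\|$ as operators from $\Dom(D)$ to $L^2$; one then passes to the bounded transform $\Phi(T_t)=T_t(I+T^*T)^{-1/2}$, which is a norm-continuous family of bounded $\tau$-Fredholm operators, and invokes Breuer's theorem \cite{Breuer69Fredholm2} that the $\tau$-index of such a family is locally constant (Lemma~\ref{L:Abstract Stability} in the paper). Breuer's proof is the von Neumann analogue of the classical perturbation argument for Fredholm operators and does not rely on integrality or on heat kernels. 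If you want to salvage your write-up, replace the supertrace/continuity discussion with a citation of (or a proof of) this norm-continuous stability result; the compact support of $F_1-F_0$ is needed only to keep each $F_s$ of Callias type, not to make anything trace class.
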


%------------------------------------------------------
%------------------------------------------------------
\section{$A$-Hilbert bundles}\label{S:A-Hilbert bundles}

In this section we recall some basic properties of von Neumann algebras, cf. \cite{Blackadar06book}, and recall the definitions of $A$-Hilbert space and $A$-Hilbert bundle, cf. \cite{Schick05}. 

%----------------------------------
\subsection{A Hilbert space completion}\label{SS:completion}
Let $A$ be a von Neumann algebra endowed with a finite, faithful, normal  trace $\tau:A\to \CC$, cf. \cite[III.2.5]{Blackadar06book}. We normalize the trace so that $\tau(\Id)=1$.

%-----------
We define on $A$ the inner product 
\begin{equation}\label{von neumann inner product}
	\left\langle a,b\right\rangle_\tau\ :=\ \tau(ab^\ast), \qquad a,b\in A,
\end{equation}
and  denote by $l^2(A)$ the Hilbert space completion of $A$ with respect to this inner product.
Notice that $l^2(A)$ is an Hilbert space endowed with an $A$-module structure.

%----------------
\begin{example}\label{Ex:Hilbert Gamma}
Suppose $\Gamma$ is a discrete group and denote by $l^2(\Gamma)$ the Hilbert space  of complex valued square summable functions on $\Gamma$. The {\em right regular representation} of $\Gamma$ is the unitary representation $R:\Gamma\to \mathcal{B}(l^2(\Gamma))$ defined by 
\[
	(R_gu)(h)=u(hg),\qquad h,\,g\in \Gamma;\ u\in l^2(\Gamma).
\]
The smallest subalgebra of $\mathcal{B}(l^2(\Gamma))$ which is weakly closed and contains all the operators $R_g\ (g\in \Gamma)$  is called the {\em von Neumann algebra} of $\Gamma$ and is denoted by $\NGam$.

On $\NeumannN\Gamma$ we have the canonical faithful positive trace $\tau$ defined by:
\begin{equation}\label{E:canonical trace}
			\tau(f)=\innerprod{f(\delta_e),\delta_e}_{l^2(\Gamma)},\ \ \ \ f\in \NGam,
\end{equation}
where $\delta_e\in l^2(\Gamma)$ is by definition the characteristic function of the unit element.

The completion $l^2(\NGam)$ of $\NGam$ with respect to this inner product is canonically isomorphic to $l^2(\Gamma)$, 
cf. \cite[Example 7.11]{Schick05}.
\end{example}

%-----------------------------------
\subsection{Ан $A$-Hilbert spaces}\label{SS:A-Hilbert spaces}
Let $A$ be a von Neumnn algebra endowed with a trace $\tau$ satisfying the same properties as in section~\ref{SS:completion}.
Suppose $A$ acts on a Hilbert space $H$.
The action of $A$ on $H$ is said  to be \emph{compatible} with the inner product if 
\begin{equation}\label{E:compatibleA}
		\langle xa^*,y \rangle = \langle x,ya \rangle,\qquad a \in {A}; \ x,y \in H.
\end{equation}
Notice, that the action of $\NGam$ on $l^2(\Gamma)$ in Example~\ref{Ex:Hilbert Gamma} is compatible with the inner product on $l^2(\Gamma)$.

%------------------
\begin{definition}\label{D:A-Hilbert space}
An \emph{$A$-Hilbert space} is a Hilbert space $H$ endowed with a compatible $A$-module structure such that there exists a separable Hilbert space $V_H$ and  an isometric $A$-linear embedding 
\[
	  i: H\ \hookrightarrow\ l^2(A)\otimes V_H.
\]
We say that $H$ is \emph{an $A$-Hilbert space of finite type} if $V_H$ can be chosen finite dimensional. 
\end{definition}
Notice, that since the action of $A$ is compatible with the scalar product, the orthogonal complement $i(H)^\perp$  to $i(H)$ in $ l^2(A)\otimes V_H$ is also an $A$-module. In other words $H$ is a projective $A$-module, cf. \cite{Blackadar06book}.

%Given Hilbert $A$-spaces $H_1$ and $H_2$, we denote by  $\mathcal{B}_A(H_1,H_2)$ the space of bounded $A$-linear maps $\phi:H_1\rightarrow H_2$ and we set $\mathcal{B}_A(H):=\mathcal{B}_A(H,H)$.

%--------------------------------
\subsection{A-Hilbert bundles}\label{SS:A-Hilbert bundles}

\begin{definition}
An \emph{$A$-Hilbert bundle} $E$ on a manifold $M$ is a locally trivial bundle of $A$-Hilbert spaces, the transition functions being $A$-Hilbert space isomorphisms. 
	
If the fibers are $A$-Hilbert spaces of finite type, the bundle is called an  \emph{$A$-Hilbert bundle of finite type}.

We denote by $C^\infty_c(M,E)$ the space of smooth compactly supported sections of $E$. 

If $M$ is endowed with a smooth measure $d\mu$ we define the $L^2$-scalar product 
\[
	(s_1,s_2)_2\ := \ \int_M\,\< s_1(x),s_2(x)\,\>\,d\mu, \qquad s_1,s_2\in C^\infty_c(M,E),
\]
and by $L^2(M,E)$ the completion of $C^\infty_c(M,E)$ with respect to this scalar product. 

We set $|s(x)|:=\<s(x),s(x)\>^{1/2}$ and denote by  
\[
	\|s\|\ :=\ \left(\,\int_M\,|s|^2\,d\mu\,\right)^{1/2}
\]
the $L^2$-norm of $s$. 
\end{definition}

%-----
\begin{example}\label{Galois covers}
Let $M$ be a smooth compact manifold and $\pi\colon \widetilde{M}\to M$ be the Galois cover of $M$ with deck transformation group $\Gamma$. We let $\Gamma$ act on the Hilbert space $l^2(\Gamma)$ by left and right convolution.
	
Let $A=\NGam$ denote  the von Neumann algebra of $\Gamma$. The right action of $\Gamma$ extends to a right action of $A$ on $l^2(\Gamma)$ commuting with the left $\Gamma$-action. Therefore, $E:=\widetilde M\times_\Gamma l^2(\Gamma)$ is an $A$-Hilbert bundle of finite type on $M$.

Notice that the space $L^2(M,E)$ coincides with the space $L^2(\widetilde{M})$  of square-integrable functions on $\widetilde{M}$.
\end{example}

%----------------------------------------------------------------------------
%----------------------------------------------------------------------------
\section{Self-adjointness of first order differential operators}\label{S:prsaD}

In this section we prove Theorem~\ref{T:IsaD}.

Since the operator $\calD$ is formally self-adjoint, to show that it is essentially self-adjoint we need  to prove that its maximal and minimal extensions coincide. Since the domain $\Dom(\calD_{\min})$ of the minimal extension is closed in the operator norm of $\calD$, it is enough to show that for every $s\in \Dom(\calD_{\max})$ there exists a sequence $\{s_k\}$ in $\Dom(\calD_{\min})$ such that 
\begin{equation}\label{E:sktos}
	\lim_{k\to\infty}s_k=s, \quad \lim_{k\to\infty}\calD s_k= \calD s, 
\end{equation}
where the limits are in $L^2$-norm topology.

%-----
\subsection{The minimal extension}\label{SS:minimal extension}
Recall that the Sobolev spaces of sections of an $A$-vector bundle were defined by  Mi{\v{s}}{\v{c}}enko and Fomenko \cite{FomenkoMoscenko80}. In particular, if $\Omega\subset M$ is an open set with compact closure, the Sobolev space $H^s_0(\Omega,E)$ is defined as the closure in Sobolev norm of the space $C_0^\infty(\Omega,E)$ of smooth sections, having compact support in $\Omega$. If $T$ is a differential operator of order $k$, then $T$ extends to a bounded operator
\[
	T:\,H^k_0(\Omega,E)\ \to \ L^2(M,E).
\]
Recall that the minimal domain $\Dom(T_{\min})$ is the closure of $C^\infty_c(M,E)$ with respect to the graph norm of $T$. We conclude that 
\begin{equation}\label{E:Hk subset Dommin}
	H^k_0(\Omega,E)\ \subset \ \Dom(T_{\min})
\end{equation}
for any open set $\Omega$ whose closure is compact. 

%-----
\subsection{The maximal extension}\label{SS:maximal extension}
Recall that the domain $\Dom(T_{\max})$ of the maximal extension consists of all sections $s\in L^2(M,E)$ such that $T{s}\in L^2(M,E)$, where $T{s}$ is understood in distributional sense.

Since $\calD$ is a first order elliptic operator, $\Dom(\calD_{\max})\subset H^1_{loc}(M,E)$. It follows now from \eqref{E:Hk subset Dommin}, that  for any Lipschitz function $\phi\in C^0_c(M)$ and any $s\in  \Dom(\calD_{\max})$ 
\begin{equation}\label{E:phis in Dommin}
		\phi\,s\ \in \ \Dom(\calD_{\min}).
\end{equation}

%----------------------
 \subsection{Proof of Theorem~\ref{T:IsaD}}\label{SS:prsaD}
By Lemma~8.9 of \cite{BrMiSh02} there exists a sequence $\{\phi_k\}$ of Lipschitz functions with compact support on $M$ such that 
\begin{equation}\label{E:cutoff}
	\begin{aligned}
	  (i&)\quad 0\leq\phi_k\leq1; \\
	  (ii&)\quad |d\phi_k|\leq\frac{1}{k};\\
       (iii&)\quad \lim_{k\to\infty}\phi_k(x)=1,\ \ \text{for al} \ \ x\in M.
   \end{aligned}
\end{equation}

For any $s\in \Dom(\calD_{\max})$ set $s_k=\phi_k s$. Then $s_k\in \Dom(\calD_{\min})$ by \eqref{E:phis in Dommin} and $\lim_{k\to \infty}s_k=s$. It remains to show that $\calD{}s_k$ converges to $\calD{}s$ in the $L^2$-norm.

We have:
\begin{equation}\label{E:calDsk}
			\calD s_k\ =\ \phi_k\calD s\ +\ [\calD,\phi_k]\,s.
\end{equation}
Notice that 
\[
	[\calD,\phi_k](x)\ =\ -i\sigma(\calD)\big(x,d\phi_k(x)\big)
\]
is a bundle map. From \eqref{E:cutoff} and \eqref{E:Istronglyelliptic} we conclude that 
\[
	\big\|\,[\calD,\phi_k]\,s\,\big\| \ = \ 
	\big\|\, \sigma(\calD)(x,d\phi_k)\,s\,\big\| \ \le \ \frac{c}k\cdot\|s\|.
\]
Hence, $\lim_{k\to\infty}[\calD,\phi_k]=0$. Since $\phi_k\calD s\to \calD s$ in $L^2$-norm we obtain
\[
	\lim_{k\to\infty}  \calD s_k \ = \ \calD s.
\]	
The essential self-adjointness of the operator $\calD$ is proved. \hfill $\square$

%----------------------------------------------------------------------------
%----------------------------------------------------------------------------
\section{Self-adjointness of a Schr\"odinger-type operator}\label{S:prsa}

In this section we prove Theorem~\ref{T:Isa}. We use the notation of Section~\ref{SS:IShr}.

We denote by $H_{V,0}$ the restriction of the operator \eqref{E:IHV} to the space $C^\infty_c(M,E^+)$ of smooth compactly supported sections of $E^+$. Let $H_{V,0}^*$ denote the operator adjoint to $H_{V,0}$ and let $\Dom(H_{V,0}^*)$ denote its domain. 

Since the operator $H_{V,0}$ is symmetric, to show that its closure is self-adjoint it is enough to prove that 
\begin{equation}\label{E:symmetry}
	( H_Vs_1,s_2) \ = \ (s_1,H_Vs_2), \qquad s_1,s_2\in \Dom(H_{V,0}^*).
\end{equation}

To prove \eqref{E:symmetry} we need some information about the behavior of sections from  $\Dom(H_{V,0}^*)$ at infinity. This information is provided by the following

%------------
\begin{proposition}\label{P:domain}
Suppose $q:M\rightarrow \RR$ is a function satisfying the assumptions of Theorem~\ref{T:Isa}.
If $s\in \Dom(H_{V,0}^*)$, then $q^{-1/2}Ds$ is square integrable and
\begin{equation}\label{E:ineqt}
    \|q^{-1/2}Ds\|^2 
       \ \leq \ 2\,\Big(\, (1+2L^2)\,\|s\|^2+\|s\|\,\|H_Vs\|\,\Big),
\end{equation}
where $L$ is the Lipschitz constant introduced in \eqref{E:ILipschitz}.
\end{proposition}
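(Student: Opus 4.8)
The plan is to test the distributional identity $H_V s=D^*Ds+Vs$ against the weighted section $\phi_k^2q^{-1}s$, where $\{\phi_k\}$ is a sequence of compactly supported Lipschitz cutoffs adapted to the complete metric $g=q^{-1}g^{TM}$, and then to let $k\to\infty$. First I would record the regularity of $s$: the condition $s\in\Dom(H_{V,0}^*)$ means precisely $s,H_Vs\in L^2$, and since $V\in L^\infty_{loc}$ we have $D^*Ds=H_Vs-Vs\in L^2_{loc}$; as $D^*D$ is a second order elliptic operator, elliptic regularity gives $s\in H^2_{loc}$, hence $Ds\in H^1_{loc}$. Consequently all the integrations by parts below are legitimate, since they take place on the compact set $\supp\phi_k$.

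Next I would set up the two geometric facts that feed the estimate. Applying Lemma~8.9 of \cite{BrMiSh02} to the \emph{complete} metric $g=q^{-1}g^{TM}$ (this is where hypothesis (3) enters) produces $\phi_k$ with $0\le\phi_k\le1$, $\phi_k\to1$ pointwise, and $|d\phi_k|_g\le 1/k$. Because $g=q^{-1}g^{TM}$, the induced cometric rescales as $|\xi|_{g^{TM}}=q^{-1/2}|\xi|_g$, so that
\[
	|d\phi_k|_{g^{TM}}\ \le\ \frac{q^{-1/2}}{k}.
\]
Hypothesis (2) says that $q^{-1/2}$ is globally Lipschitz with constant $L$ with respect to the $g^{TM}$-distance, which I read infinitesimally as $|d(q^{-1/2})|_{g^{TM}}\le L$ almost everywhere.

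The heart of the argument is the integration-by-parts identity
\[
	\|\phi_k q^{-1/2}Ds\|^2\ =\ \big(H_Vs,\phi_k^2q^{-1}s\big)\ -\ \big(Vs,\phi_k^2q^{-1}s\big)\ -\ \big(Ds,[D,\phi_k^2q^{-1}]s\big),
\]
with real parts understood. Set $X_k:=\|\phi_k q^{-1/2}Ds\|$. The first term is at most $\|s\|\,\|H_Vs\|$, using $0\le\phi_k^2q^{-1}\le1$ (here $q\ge1$ is used). For the second term the weight $q^{-1}$ is chosen precisely so that the hypothesis $V\ge-q\Id$ gives $(Vs,\phi_k^2q^{-1}s)\ge-\int\phi_k^2|s|^2\ge-\|s\|^2$; it is this $q^{-1}\!\cdot q$ cancellation that makes the lower bound on $V$ usable. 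The commutator term is the main obstacle. Writing $[D,f]=\sigma(D)(\cdot,df)$ (up to an irrelevant unimodular factor) and using the product rule $d(\phi_k^2q^{-1})=2\phi_k q^{-1}d\phi_k+2\phi_k^2 q^{-1/2}d(q^{-1/2})$ together with the symbol bound \eqref{E:Istronglyelliptic} (normalizing $c=1$), I would split the integrand into the cutoff-gradient piece, controlled by $q^{-1/2}/k$, and the potential-gradient piece, controlled by $L$, arranging each factor so that exactly one copy of $\phi_k q^{-1/2}Ds$ is pulled out by Cauchy–Schwarz. This yields
\[
	\big|\big(Ds,[D,\phi_k^2q^{-1}]s\big)\big|\ \le\ 2\Big(\tfrac1k+L\Big)\,X_k\,\|s\|.
\]

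Combining the three bounds gives the quadratic inequality $X_k^2\le\|s\|^2+\|s\|\,\|H_Vs\|+2(\tfrac1k+L)X_k\|s\|$. Absorbing the linear term by Young's inequality $2(\tfrac1k+L)X_k\|s\|\le\tfrac12X_k^2+2(\tfrac1k+L)^2\|s\|^2$ and rearranging gives the $k$-uniform bound
\[
	X_k^2\ \le\ 2\|s\|^2\ +\ 2\|s\|\,\|H_Vs\|\ +\ 4\Big(\tfrac1k+L\Big)^2\|s\|^2 .
\]
Finally, since $\phi_k^2q^{-1}|Ds|^2\to q^{-1}|Ds|^2$ pointwise, Fatou's lemma shows $q^{-1/2}Ds\in L^2$ and, letting $k\to\infty$ (so $1/k\to0$), delivers $\|q^{-1/2}Ds\|^2\le 2\big((1+2L^2)\|s\|^2+\|s\|\,\|H_Vs\|\big)$, which is \eqref{E:ineqt}. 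I expect the delicate point to be the bookkeeping in the commutator estimate: balancing the two gradient contributions and extracting precisely one factor of $\|\phi_k q^{-1/2}Ds\|$ so that it can be absorbed, while keeping the surviving constants sharp enough to reproduce the stated coefficient $1+2L^2$.
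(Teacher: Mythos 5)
Your proposal is correct and follows essentially the same route as the paper: the weight $\phi_k^2q^{-1}$ is exactly the square of the paper's test function $\psi_k=\phi_k q^{-1/2}$, and the integration by parts, the use of $V\ge -q\,\Id$ against the $q^{-1}$ weight, the Cauchy--Schwarz plus Young absorption of the commutator term, and the passage to the limit all coincide (your use of Fatou at the last step is, if anything, slightly more careful than the paper's appeal to dominated convergence, and your choice of cutoffs adapted to $g$ rather than $g^{TM}$ yields the same bound $|d\psi_k|\le \tfrac1k+L$). No substantive differences to report.
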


\begin{remark}\label{R:domain}
For the Schr\"odinger operator on scalar valued functions on
$\RR^n$ an analogous lemma was established in \cite{Ro-Be70}. The proof was
adapted in \cites{Ol1,Ol2} to the case of a Riemannian manifold and to differential forms in \cite{Br2}. The case of a general operator $D$ and a singular potential $V$ was considered in \cite{BrMiSh02}.  
\end{remark}

%---------------
\subsection{Regularity of sections from $\Dom(H_{V,0}^*)$}\label{SS:regiularity}
The theory of elliptic (pseudo)-differential operators on $A$-Hilbert bundles of finite type was developed in \cite{BFKM} (see also \cite{FomenkoMoscenko80} for a similar theory for bundles of finitely generated $A$-modules). In particular, the Sobolev spaces of sections of such bundles are introduced in these papers and it is shown that any $s\in \Dom(H_{V,0}^*)$ belongs to the Sobolev space $H^2_{loc}$. Hence, 
\begin{equation}\label{E:DsVs}
	Ds\ \in \ L^2_{loc}(M,E^-), \ \ Vs\ \in \ L^2_{loc}(M,E^+),\qquad\text{for any}\quad s\in \Dom(H_{V,0}^*).
\end{equation}
The new information provided by Proposition~\ref{P:domain} is about the rate of decay of $Ds$ at infinity. 

\begin{remark}
The equation \eqref{E:DsVs} is the only place in the proof of Theorem~\ref{T:Isa} where we use the fact that the fibers of $E^\pm$ are modules of finite type. The rest of the proof of Theorem~\ref{T:Isa} follows the lines of \cite[\S9]{BrMiSh02} with almost no changes. It is even simpler, since in \cite{BrMiSh02} much more singular potentials are considered.
\end{remark}

Set $\hatD=-i\sigma(D)$. Then 
\[
	D(\phi s)\ = \ \hatD(d\phi)s+\phi Ds.
\]
Note that $\widehat{D^*}=-(\hatD)^*$.

%----
\subsection{Proof of Proposition~\ref{P:domain}}\label{SS:prdomain}
Let $\psi$ be  a Lipschitz function with compact support such that a $0\leq\psi\leq q^{-1/2}\le 1$. 
Set 
\[
     C \ = \ \esssup_{x\in M}\, \|\hatD(d\psi)\|.
\]
Using \eqref{E:DsVs} we obtain
\begin{equation}\label{E:psiDs2}
	\begin{aligned}
     \|\psi Ds\|^2 \ & = \ (D^*(\psi^2Ds),s)
     \ = \
     (\psi^{2}D^*Ds,s) + 2(\psi\widehat{D^*}(d\psi)Ds,s)\\ 
		 &= \
     \RE(\psi^2D^*Ds,s) \ + \ 2\RE(\psi\widehat{D^*}(d\psi)Ds,s)\\
     & \le \ \RE(\psi^2D^*Ds,s) \ + \ 2C\|\psi Ds\|\|s\|\\
     & =\ \RE (\psi^2H_Vs,s)-(\psi^2Vs,s)+2C||\psi Ds\|\,\|s\|.
	\end{aligned}
\end{equation}
Since $V\ge -q\Id$, $q\ge 1$ and $q\psi^2\le 1$, we have
\begin{equation}\label{E:psiVss}
	(\psi^2Vs,s)\ = \ (V\psi s,\psi s) \ \ge\  - (q\psi s,\psi s) \ \ge \ -\| s\|^2.
\end{equation}
Using the inequality $ab\le \frac12 a^2+\frac12{b^2}$ we obtain
\begin{equation}\label{E:psiDss}
	2C\|\psi Ds\|\,\|s\| \ \le \ \frac12\,\|\psi Ds\|^2\ +\  2C^2\,\|s\|^2.
\end{equation}
Combining \eqref{E:psiDs2},\eqref{E:psiVss}, and \eqref{E:psiDss} and using that $\psi^2\le q^{-1}\le 1$ we get
\[
	\|\psi Ds\|^2\ \le \ \|H_Vs\|\,\|s\|\ +\ \frac12\,\|\psi Ds\|^2\ +\ (1+2C)\,\|s\|^2,
\]
and
\begin{equation}\label{E:psiDs<}
    \|\psi Ds\|^{2}%_{L^2}
       \ \leq \ 2\,\Big(\, (1+2C^2)\,\|s\|^2+\|s\|\,\|H_Vs\|\,\Big).
\end{equation}

To prove \eqref{E:ineqt} we now make a special choice of the function $\psi$. Let $\phi_k$ be as in \eqref{E:cutoff} and set 
\(
	\psi_k :=  \phi_k\cdot q^{-1/2}.
\)
Then $0\leq\psi_{k}\leq q^{-1/2}$, and
\[
	|d\psi_{k}|\ \le\ |d\phi_k|\cdot q^{-1/2}\ +\ \phi_k|dq^{-1/2}|.
\] 
Therefore, $|d\psi_{k}|\leq\frac{1}{k}+L$.  Since
$\psi_{k}(x)\to q^{-1/2}(x)$ as $k\to\infty$, the dominated convergence theorem applied
to \eqref{E:psiDs<} with $\psi=\psi_k$ immediately implies~\eqref{E:ineqt}.
\hfill$\square$

%----------------
\subsection{Proof of Theorem~\ref{T:Isa}}\label{SS:prsa}
Let $s_1,s_2\in \Dom(H_{V,0}^*)$. Then
\[
    (\phi s_1, D^*Ds_2) \ = \ (D(\phi s_1),Ds_2)
         \ = \
    (\hatD(d\phi)s_1,Ds_2)+(\phi Ds_1,Ds_2).
\]
Similarly, 
\[
     (D^*Ds_1,\phi s_2) \ = \  (Ds_1,\hatD(d\phi)s_2)+(\phi Ds_1,Ds_2)
\]
Hence, 
\begin{equation}\label{E:Huv-uHv}\notag
	(\phi s_1,H_Vs_2)-(H_Vs_1,\phi s_2)
     \ = \ (\hatD(d\phi)s_1,Ds_2)-(Ds_1,\hatD(d\phi)s_2).
\end{equation}

By \eqref{E:Istronglyelliptic},
\[
     \esssup_{x\in M}|\hatD(d\phi)|\ \leq\ c\; \esssup_{x\in M}|d\phi(x)|,
\]
Therefore,
\begin{equation}\label{E:pravlev}
      |(\phi s_1,H_Vs_2)-(H_Vs_1,\phi s_2)|
     \ \leq \ c\; \underset{x\in M}\esssup \left(|d\phi|q^{1/2}\right)\cdot
            \left(\|s_1\|\|q^{-1/2}Ds_2\|+\|s_2\|\|q^{-1/2}Ds_1\|\right).
\end{equation}

Consider a metric $g:=q^{-1}g^{TM}$. By condition (iii) of Theorem~\ref{T:Isa} this metric is complete.  By Lemma~8.9 of \cite{BrMiSh02} there exists a sequence $\{\phi_k\}$ of Lipschitz functions such that 
\begin{enumerate}
\item  $0\leq\phi_k\leq1$ and $|d\phi_k|_g\leq\frac{1}{k}$;
\item $\lim_{k\to\infty}\phi_k(x)=1$, for all $x\in M$.
\end{enumerate}
Since
$|d\phi_k|_{g}=q^{1/2}|d\phi_k|$, we conclude
\[
	\esssup_{x\in M}(|d\phi_k|q^{1/2}(x)) \ \leq\ \frac{1}{k}.
\] 
Using \eqref{E:pravlev}, we obtain
\[
     |(\phi_k u,H_Vv)-(H_Vu,\phi_k v)|
     \ \leq \ \frac{c}{k}\left(\|s_1\|\|q^{-1/2}Ds_2\|+\|q^{-1/2}Ds_1\|\|s_2\|\right)
        \ \to \  0, \quad\text{as} \quad k\to\infty,
\]
where the convergence to $0$ of the RHS of this inequality follows from Proposition~\ref{P:domain}.
On the other side, by the dominated convergence theorem we have
\[
     (\phi_k s_1,H_Vs_2)-(H_Vs_1,\phi_k s_2) \ \longrightarrow \ (s_1,H_Vs_2)-(H_Vs_1,s_2)
\]
as $k\to\infty$. Thus $(H_Vs_1,s_2)=(s_1,H_Vs_2)$, for all $s_1, s_2\in \Dom(H_{V,0}^*)$. Therefore, $H_V$ is essentially self-adjoint.\hfill$\square$

%-----------------------------------------------
%-----------------------------------------------
\section{Fredholmness} \label{S:Fredholm}

In this section we prove Theorems~\ref{T:IN<infty} and \ref{T:IFredholm}.

%------------------------------------------------------------------------------------------------------------------------------
\subsection{Variational principle}\label{SS:variational}
We make use of a variational principle in the von Neumann setting, 
stated in the following:
\begin{lemma}\label{L:variational principle}
Let $H$ be an $A$-Hilbert space and $T$ be a self-adjoint operator commuting with the action of $A$.
Then, for every $\lambda\in\mathbb{R}$,
\begin{equation}\label{var.princ.eq.}
			N_\tau(\lambda;T)=\sup_L \dim_\tau L,
\end{equation}
where $L$ varies among the $A$-Hilbert subspaces of $H$ with $L\subset \Dom(T)$ and satisfying:
\[
	(Tu,u)\leq \lambda \,(u,u),\qquad  u\in L.
\]
\end{lemma}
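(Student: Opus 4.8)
The plan is to establish the von Neumann algebra analogue of the classical min-max (variational) principle for the bottom of the spectrum. The key object is the spectral counting function $N_\tau(\lambda;T)=\Tr_\tau P_\lambda$, where $P_\lambda$ is the spectral projection of $T$ onto $(-\infty,\lambda]$. I would prove the two inequalities separately, establishing that $N_\tau(\lambda;T)$ both dominates and is dominated by $\sup_L\dim_\tau L$ over admissible subspaces $L$.

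For the inequality $N_\tau(\lambda;T)\leq\sup_L\dim_\tau L$, the natural candidate subspace is $L=\im P_\lambda$ itself. First I would check that this is an admissible competitor: since $T$ commutes with the $A$-action, so does $P_\lambda$ (it is a spectral projection, obtained as a weak limit of bounded functions of $T$), hence $\im P_\lambda$ is an $A$-Hilbert subspace. One must verify $\im P_\lambda\subset\Dom(T)$, which is not automatic when $\lambda$ lies in the continuous spectrum and $T$ is unbounded; the clean fix is to note $TP_\lambda$ is bounded on $\im P_\lambda$ by the spectral theorem, so $\im P_\lambda\subset\Dom(T)$ indeed. The spectral calculus then gives $(Tu,u)=\int_{-\infty}^\lambda\mu\,d(E_\mu u,u)\leq\lambda\,(u,u)$ for $u\in\im P_\lambda$, so $L=\im P_\lambda$ satisfies the constraint, and $\dim_\tau L=\Tr_\tau P_\lambda=N_\tau(\lambda;T)$, yielding this direction.

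For the reverse inequality $\dim_\tau L\leq N_\tau(\lambda;T)$ for every admissible $L$, I would fix such an $L$ and compare the projection $Q$ onto $L$ with $P_\lambda$. The classical argument shows that $P_\lambda$ restricted to $L$ is injective — if $u\in L$ with $P_\lambda u=0$, then $u$ lives in the spectral subspace above $\lambda$, forcing $(Tu,u)>\lambda(u,u)$ unless $u=0$, contradicting the constraint. In the von Neumann setting injectivity of $P_\lambda|_L$ is not enough; instead I would argue that the map $P_\lambda|_L\colon L\to\im P_\lambda$, being an $A$-linear map that is injective in the strong sense appropriate to $\tau$-dimension, does not decrease $\tau$-dimension of its source into its target. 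Concretely, $\dim_\tau L\leq\dim_\tau\im P_\lambda$ follows from the monotonicity of $\tau$-dimension under $A$-linear maps with trivial (zero $\tau$-dimensional) kernel, which is one of the standard properties of the dimension function for finitely generated projective Hilbert $A$-modules.

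The main obstacle, and the step requiring the most care, is this last monotonicity argument: in the von Neumann framework one cannot simply count dimensions, and the image $\im P_\lambda$ need not be closed nor the restriction $P_\lambda|_L$ bounded below. I would handle this by passing through the polar decomposition of $P_\lambda Q$ (or of $QP_\lambda$) and invoking that the left and right supports of an $A$-linear partial isometry have equal $\tau$-dimension, together with the fact that the constraint $(Tu,u)\leq\lambda(u,u)$ forces the kernel projection of $P_\lambda|_L$ to be zero. Establishing that this kernel projection is genuinely zero (and not merely of zero $\tau$-dimension) is the crux, and it is exactly here that the strict inequality in the spectral calculus on the complement of $\im P_\lambda$ is used. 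Once both inequalities are in hand, the identity \eqref{var.princ.eq.} follows.
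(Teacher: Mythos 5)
The paper does not actually prove this lemma: it declares it well known and refers to Shubin's Lemma~2.4 in the Morse-inequalities paper, remarking that the general finite von Neumann algebra case follows with minor modifications. Your argument is precisely the standard two-inequality proof carried out in that reference, so in substance you are reproducing the intended proof rather than diverging from it. The lower bound via $L=\im P_\lambda$ and the upper bound via injectivity of $P_\lambda|_L$ (if $P_\lambda u=0$ then $u$ lies in the spectral subspace for $(\lambda,\infty)$, where $(Tu,u)>\lambda(u,u)$ strictly) combined with monotonicity of $\dim_\tau$ under injective bounded $A$-linear maps is exactly right; that monotonicity is L\"uck's Theorem~1.12(2), which this paper already invokes for the same purpose in the proof of Lemma~\ref{L:IFredholmness}, and its proof via polar decomposition is the one you sketch. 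Two small corrections. First, your ``clean fix'' asserting $\im P_\lambda\subset\Dom(T)$ because $TP_\lambda$ is bounded is false for a general self-adjoint $T$: if $T$ is unbounded below, $T$ restricted to $\im P_\lambda$ is unbounded and $\im P_\lambda\not\subset\Dom(T)$ (e.g.\ multiplication by $x$ on $L^2(\RR)$). The repair is to take $L_n=\im E([-n,\lambda])$, which is admissible, and let $n\to\infty$ using normality of $\Tr_\tau$; in the paper's applications $T$ is in any case bounded below, so $L=\im P_\lambda$ works there. Second, your worry that ``$\im P_\lambda$ need not be closed'' is unfounded, since $P_\lambda$ is an orthogonal projection; the set that may fail to be closed is $P_\lambda(L)$, and one simply takes its closure inside $\im P_\lambda$, which costs nothing. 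Likewise, genuine injectivity of $P_\lambda|_L$ is all that is needed; there is no residual issue about a kernel projection of zero $\tau$-dimension, since $\tau$ is faithful.
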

This lemma is well-known.
For the proof we refer to \cite[Lemma 2.4]{Shubin96Morse} where the case $A=\NeumannN\Gamma$ is treated 
	(the case of a general finite von Neumann algebra $A$ follows with minor modifications).

%-----------------------------------------------------------------------------------------------------------------
\subsection{Restriction to a compact subset}\label{SS:restriction}
Note, first, that for any $A$-linear self-adjoint operator $T$ and any $l,\,\lambda\in \RR$ we have:
\begin{equation}
	N_\tau(\lambda+l;T+l)\ =\ N_\tau(\lambda;T).
\end{equation}
Therefore, by replacing $V(x)$ with $V(x)+l$ in Theorem~\ref{T:IN<infty},  we can assume that $C,\lambda>0$ in (\ref{E:IFredholmcondition}) and (\ref{E:IN<infty}) and  $V(x)\geq 0$ for all $x\in M$.

Let the compact set $K\subset M$ be  as in  \eqref{E:IFredholmcondition}. For any $\lambda>0$ consider the set
\[
	 M_\lambda \ := \ \{ x\in M:\, V(x)\geq \lambda\}.
\]
Then 
\[
	\Omega_\lambda\ :=\ M\backslash M_\lambda
\]
is an open set. We denote its closure by $\bar{\Omega}_\lambda$. We now assume that $0<\lambda<C$ and choose $\lambda_1$ such that $0<\lambda<\lambda_1<C$. Then 
\[
	\bar{\Omega}_{\lambda}\ \subset\  \Omega_{\lambda_1}, \qquad 
	\bar{\Omega}_{\lambda_1}\ \subset\  \Omega_{C}, \qquad \bar{\Omega}_C\subset K. 
\]

Let $\phi:M\to [0,1]$ be a smooth function such that 
\[
	\phi|_{\Omega_{\lambda_1}}\equiv 1, \qquad \phi|_{M\backslash\Omega_{C}}\equiv 0.
\]
Define the $A$-linear restriction map $\rho: L^2(M,E) \ \to \ L^2(\Omega_{C},E|_{\Omega_{C}})$ by the formula
\begin{equation}\label{E:MtoK1}
	\rho(s):= \phi s.
\end{equation} 

%------
\begin{lemma}\label{L:restriction}
Let $L\subset \Dom(H_V)$ be an $A$-Hilbert subspace of $L^2(M,E)$ satisfying
\begin{equation}\label{E:Fredholmness:eq1}
			(H_Vs,s)\ \leq\ \lambda\,(s,s),\qquad s\in L.
\end{equation}
Then $\rho$ is injective when restricted to $L$ and $\rho(L)$ is a closed $A$-invariant subspace of $L^2(\Omega_\lambda,E|_{\Omega_\lambda})$. 
\end{lemma}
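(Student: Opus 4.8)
The plan is to reduce everything to a single quantitative estimate: that the restriction $\rho|_L$ is bounded below by a positive constant. This one inequality simultaneously delivers injectivity of $\rho|_L$ and closedness of $\rho(L)$, since a bounded operator that is bounded below on a complete space has trivial kernel and closed range (and $L$, being an $A$-Hilbert subspace, is complete). The $A$-invariance of $\rho(L)$ is immediate and requires no estimate: $\rho$ is multiplication by the scalar function $\phi$, which acts fiberwise as a scalar and therefore commutes with the $A$-action, so $\rho$ is $A$-linear and carries the $A$-invariant subspace $L$ onto an $A$-invariant subspace of the target $L^2(\Omega_C,E|_{\Omega_C})$ (the image is supported in $\bar\Omega_C$, the support of $\phi$).

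The core is to control the mass of $s$ destroyed by multiplication by $\phi$. First, by Proposition~\ref{P:domain} applied with $q\equiv 1$ (admissible after the normalization of Section~\ref{SS:restriction}, since then $V\geq 0\geq -\Id$ and $g^{TM}$ is complete), every $s\in\Dom(H_V)$ satisfies $Ds\in L^2$, so that $(H_Vs,s)=\|Ds\|^2+(Vs,s)$ with both summands finite and nonnegative. Combined with the hypothesis \eqref{E:Fredholmness:eq1}, dropping $\|Ds\|^2\geq 0$ gives $(Vs,s)\leq\lambda\|s\|^2$ for every $s\in L$. Next, since $\phi\equiv 1$ on $\Omega_{\lambda_1}$, the function $1-\phi^2$ is supported in $M\setminus\Omega_{\lambda_1}=M_{\lambda_1}$, where $V(x)\geq\lambda_1\Id$; pointwise $\lambda_1|s|^2\leq\langle Vs,s\rangle$ there, and extending the integral to all of $M$ via $V\geq 0$ yields
\[
	\int_M (1-\phi^2)\,|s|^2\,d\mu \ \leq\ \int_{M_{\lambda_1}}|s|^2\,d\mu \ \leq\ \frac{1}{\lambda_1}\,(Vs,s) \ \leq\ \frac{\lambda}{\lambda_1}\,\|s\|^2 .
\]
Consequently
\[
	\|\rho(s)\|^2 \ =\ \int_M \phi^2\,|s|^2\,d\mu \ =\ \|s\|^2-\int_M (1-\phi^2)\,|s|^2\,d\mu \ \geq\ \Big(1-\frac{\lambda}{\lambda_1}\Big)\|s\|^2 .
\]

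Because $0<\lambda<\lambda_1$, the constant $1-\lambda/\lambda_1$ is strictly positive, so $\rho|_L$ is bounded below and the conclusions of the lemma follow as in the first paragraph. The step I expect to be the main obstacle is precisely the justification that $(H_Vs,s)=\|Ds\|^2+(Vs,s)$ holds with finite, nonnegative summands on all of $\Dom(H_V)$: this is what converts the abstract spectral bound \eqref{E:Fredholmness:eq1} into the usable pointwise control $(Vs,s)\leq\lambda\|s\|^2$, and it rests on self-adjointness (Corollary~\ref{C:Isaboudedbelow}), on the global integrability $Ds\in L^2$ furnished by Proposition~\ref{P:domain}, and on the normalization $V\geq 0$. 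The strictness $\lambda<\lambda_1$, built into the choice of $\lambda_1$ in Section~\ref{SS:restriction}, is what makes the lower bound nondegenerate.
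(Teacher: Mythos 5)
Your proof is correct and follows essentially the same route as the paper: both reduce the lemma to the lower bound $\|\rho(s)\|^2\ge(1-\lambda/\lambda_1)\|s\|^2$, obtained from Proposition~\ref{P:domain} (giving $(Vs,s)\le(H_Vs,s)\le\lambda\|s\|^2$) together with $V\ge\lambda_1$ on $M\setminus\Omega_{\lambda_1}$ where $1-\phi^2$ is supported. The only difference is cosmetic — you subtract $\int(1-\phi^2)|s|^2$ directly while the paper splits the integral over $\Omega_{\lambda_1}$ and its complement — and the constants agree.
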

\begin{proof}
The potential $V$ satisfies the inequality $V(x)\geq 0$ for any $x\in M$. Hence, the operator $H_V$ satisfies the conditions of Theorem~\ref{T:Isa} with $q=\operatorname{const}$. It follows from Proposition~\ref{P:domain} that for any $s\in \Dom(H_V)$ we have $Ds\in L^2(M,E)$. Hence, 
\begin{equation}\label{E:HVss}
	(H_Vs,s)\ = \ \|Ds\|^2\ +\ (Vs,s)\ \ge\ (Vs,s).
\end{equation}
In particular, $(Vs,s)<\infty$. 

For any $s\in L$  using \eqref{E:HVss} we obtain
\[
	\lambda\,\|s\|^2\ \geq (H_Vs,s)\ \geq\   (Vs,s)\ \geq\ 
		\lambda_1\,\int_{M\setminus \Omega_{\lambda_1}} |s(x)|^2\, d\mu(x)
\]
and
\[ 
		\lambda\,\int_{\Omega_{\lambda_1}} |s(x)|^2\, d\mu(x)\ \ge\
		(\lambda_1-\lambda)\,\int_{M\setminus \Omega_{\lambda_1}} |s(x)|^2\, d\mu(x).
\]
Hence,
\[
	\|\rho(s)\|^2\ \geq \ \int_{\Omega_{\lambda_1}}\,|s(x)|^2\,d\mu(x) \ \geq\ 
	\frac{\lambda_1-\lambda}{\lambda}\, 
	             \int_{M\setminus \Omega_{\lambda_1}} |s(x)|^2\, d\mu(x).
\]
Therefore,
\begin{equation}\label{E:rho(s)>s}
	\begin{aligned}
		\|s\|^2\ & =  \int_{\Omega_{\lambda_1}} |s(x)|^2\, d\mu(x)+\,\int_{M\setminus \Omega_{\lambda_1}} |s(x)|^2\, d\mu(x)\\
		&\leq \|\rho(s)\|^2\ +\ \frac{\lambda}{\lambda_1-\lambda}\,\|\rho(s)\|^2
		= \left(\,\frac{\lambda_1}{\lambda_1-\lambda}\,\right)\, \|\rho(s)\|^2.
	\end{aligned}
\end{equation}
This inequality together with the fact that $\rho$ is a bounded $A$-equivariant map proves the lemma. 
\end{proof}

%-----------------------------
\subsection{Extension to a closed manifold}\label{SS:tilM}
Choose a closed manifold $\widehat{M}$  containing $\Omega_C$ as an open subset. Let $\widehat{E}=\widehat{E}^+\oplus\widehat{E}^-$ be a graded $A$-Hilbert bundle of finite type over $\widehat{M}$ extending $E|_{\Omega_C}$.  Let
$\widehat{D}:C^\infty(\widehat{E}^+)\rightarrow C^\infty(\widehat{E}^-)$ and $\widehat{V}:\widehat{E}^+\to \widehat{E}^+$  
 be a first order elliptic differential operator  and a positive bundle map which agree with $D$ and $V$ on $\Omega_C$. Set
 \[
 		H_{\widehat{V}} \ := \ \widehat{D}^*\widehat{D}\ + \ \widehat{V}.
 \]
Then the restrictions of $H_V$ and $H_{\widehat{V}}$ to $\Omega_C$ coincide. 

We view $\rho(L)\subset L^2(\Omega_C,E^+|_{\Omega_C})$ as a subspace of $L^2(\widehat{M},\widehat{E}^+)$.

\begin{lemma}\label{L:HVrhosrhos}
Under the assumptions of Lemma~\ref{L:restriction}, there exists a constant $R>0$ such that  for any $u\in \rho(L)$ we have 
\begin{equation}\label{E:HVrhosrhos}
	\big(\,H_{\widehat{V}}u,u\,\big) \ \leq\ R\,(u,u).
\end{equation}
\end{lemma}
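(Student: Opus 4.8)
The plan is to use the explicit form $u=\rho(s)=\phi s$ with $s\in L$, and to exploit that $u$ is supported in $\Omega_C$, where $\widehat D$ and $\widehat V$ coincide with $D$ and $V$. First I would verify that $u\in\Dom(H_{\widehat V})$: by the regularity statement \eqref{E:DsVs} every $s\in\Dom(H_V)$ belongs to $H^2_{loc}$, and since $\phi$ is smooth with support contained in $\Omega_C$, the section $u=\phi s$, extended by zero, lies in $H^2(\widehat M,\widehat E^+)$; as $\widehat M$ is closed, this space is exactly $\Dom(H_{\widehat V})$. This regularity is what makes the integration by parts legitimate, so that, computing entirely on $\Omega_C$ where the two operators agree,
\[
\big(H_{\widehat V}u,u\big)=\|\widehat D u\|^2+\big(\widehat V u,u\big)=\big\|\phi\,Ds+[D,\phi]s\big\|^2+(Vu,u),
\]
where I used $\widehat D u=D(\phi s)=\phi\,Ds+[D,\phi]s$ with $[D,\phi](x)=-i\sigma(D)(x,d\phi(x))$.

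Next I would bound each term by a multiple of $\|s\|^2$. For the first term, $\|\phi\,Ds+[D,\phi]s\|^2\le 2\|\phi\,Ds\|^2+2\|[D,\phi]s\|^2$. Since $V\ge 0$, the identity $(H_Vs,s)=\|Ds\|^2+(Vs,s)$ from \eqref{E:HVss} gives $\|Ds\|^2\le(H_Vs,s)\le\lambda\|s\|^2$, hence $\|\phi\,Ds\|^2\le\lambda\|s\|^2$; and by \eqref{E:Istronglyelliptic} the bundle map $[D,\phi]$ satisfies $\|[D,\phi]\|\le c\,\esssup_{x\in M}|d\phi(x)|=:c_1$, so $\|[D,\phi]s\|^2\le c_1^2\|s\|^2$. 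For the potential term, $V$ is essentially bounded on the relatively compact set $\Omega_C$ (since $V\in L^\infty_{loc}$ and $\bar\Omega_C$ is compact), whence $(Vu,u)\le c_2\|u\|^2\le c_2\|s\|^2$. Combining these estimates yields a constant $R_0>0$, independent of $s\in L$, with $(H_{\widehat V}u,u)\le R_0\|s\|^2$.

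Finally I would convert this $\|s\|^2$–bound into the required $\|u\|^2$–bound by invoking the reverse estimate already established within the proof of Lemma~\ref{L:restriction}, namely \eqref{E:rho(s)>s}, which gives $\|s\|^2\le\frac{\lambda_1}{\lambda_1-\lambda}\,\|\rho(s)\|^2=\frac{\lambda_1}{\lambda_1-\lambda}\,\|u\|^2$. Substituting produces $(H_{\widehat V}u,u)\le R\,(u,u)$ with $R:=\frac{\lambda_1}{\lambda_1-\lambda}\,R_0$, which is the claim.

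The step I expect to be most delicate is precisely this last passage from control by the full norm $\|s\|^2$ of the original section to control by the a priori much smaller norm $\|u\|^2$ of its truncation: the quantity $(H_{\widehat V}u,u)$ is naturally estimated by $\|s\|^2$, yet the lemma demands an estimate by $\|u\|^2$. This is supplied by the lower bound \eqref{E:rho(s)>s}, and it is the only point where the spectral hypothesis $(H_Vs,s)\le\lambda(s,s)$ together with the strict separation $\lambda<\lambda_1$ is essential. The secondary point requiring care is the verification $u\in\Dom(H_{\widehat V})$ through the $H^2_{loc}$–regularity of \eqref{E:DsVs}, which underpins the identity $(H_{\widehat V}u,u)=\|\widehat D u\|^2+(\widehat V u,u)$.
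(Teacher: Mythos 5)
Your proposal is correct and follows essentially the same route as the paper: expand $\widehat{D}u=D(\phi s)=\phi\,Ds+[D,\phi]s$, bound the commutator via \eqref{E:Istronglyelliptic}, control $\|Ds\|^2$ by $(H_Vs,s)\le\lambda\|s\|^2$, and convert $\|s\|^2$ into $\|u\|^2$ via \eqref{E:rho(s)>s}. The only (harmless) deviations are that you bound the potential term using the essential boundedness of $V$ on the compact set $\bar\Omega_C$ rather than the paper's estimate $(\phi^2Vs,s)\le(Vs,s)$ absorbed back into $(H_Vs,s)$, and that you explicitly verify $u\in\Dom(H_{\widehat V})$, a point the paper leaves implicit.
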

\begin{proof}
Set 
\[
	a\ :=\ \max_{x\in M} \|\sigma(D)(x,d\phi(x))\|.
\]
Notice that the maximum exists, since $\phi$ has compact support.

For $u\in \rho(L)$ there exists $s\in L$ such that $u= \rho(s)= \phi{}s$. Then
\begin{equation}\label{E:tilDu<}
	\begin{aligned}
	\|\widehat{D} u\|^2\ &= \ \|D(\phi s)\|^2 \ = \ \big(\,\|\phi Ds\|+\|[D,\phi]s\|\,\big)^2\\ 
		&\leq \ \big(\,\|Ds\|+a\,\|s\|\,\big)^2 \ \leq \ 2\,\|Ds\|^2\ + \ 2a^2\,\|s\|^2.
	\end{aligned}
\end{equation}
Also, since $V(x)>0$ for all $x\in M$ we conclude
\begin{equation}\label{E:Vuu>}
	(\widehat{V} u,u)\ = \ (\phi^2Vs,s)\ \leq \ (Vs,s).
\end{equation}

By using \eqref{E:tilDu<} and \eqref{E:Vuu>} we obtain
\[
\begin{aligned}
	\big(\,H_{\widehat{V}}u,u\,\big) \ & =  \ 
	\|\widehat{D} u\|^2\ + \ (\widehat{V} u,u) \ \leq\ 
	2 \|Ds\|^2 \ + \ 2a^2\|s\|^2 \ + \ (Vs,s) \\ 
	&\leq   \ 
	2\,\big(\, \|Ds\|^2+ (Vs,s)\,\big) \  + \ 2a^2\|s\|^2 \ = \ 
	2\,(H_Vs,s) \ + \ 2a^2\,\|s\|^2.
\end{aligned}
\]
Using \eqref{E:Fredholmness:eq1} and \eqref{E:rho(s)>s} we now conclude 
\[
	\big(\,H_{\widehat{V}}u,u\,\big)\ \leq\ 2(\lambda+a^2)\,\|s\|^2 \ \leq\ 
	2(\lambda+a^2)\, \left(\,\frac{\lambda_1}{\lambda_1-\lambda}\,\right)\,\|u\|^2.
\]
Hence, \eqref{E:HVrhosrhos} holds with $R=	2(\lambda+a^2)\, \left(\,\frac{\lambda_1}{\lambda_1-\lambda}\,\right)$.
\end{proof}

%-----------------------------
\subsection{Proof of Theorem~\ref{T:IN<infty}}\label{SS:prIN<infty}
Since $\widehat{V}\ge0$, we have
\begin{equation}\label{E:spectral counting functions comparison}
	N_\tau(\lambda;H_{\widehat{V}})\leq N_\tau(\lambda;\widehat{D}^*\widehat{D}),\ \ \ \ \ \ \ \ \lambda\in\RR.
\end{equation}
It is shown in \cite{BFKM}*{\S2.4} that the spectral counting function $N_\tau(\,\lambda;\widehat{D}^*\widehat{D})$ is finite. Hence, it  follows from (\ref{E:spectral counting functions comparison}) and   Lemmas~\ref{L:variational principle} and \ref{L:HVrhosrhos} that 
\[
	\dim_\tau \rho(L)\ \le\ N_\tau(R;H_{\widehat{V}}) \ < \ \infty.
\]
From Lemma~\ref{L:restriction} and the open mapping theorem we deduce that the map $\rho|_L:L\rightarrow \rho(L)$ is an isomorphism of Hilbert $A$-spaces. By \cite[Theorem 1.12 (2)]{Luck02book} \begin{equation}\label{E:L=rho(L)}
	 \dim_\tau L\ =\ \dim_\tau \rho(L) \ \le \ N_\tau(R;H_{\widehat{V}}).
\end{equation} 
Hence, by Lemma~\ref{L:variational principle} we get
\[
	N_\tau(\lambda;H_V) \ \le\ N_\tau(R;H_{\widehat{V}}) \ < \ \infty.
\]
Theorem \ref{T:IN<infty} is proven.\hfill $\square$

%-------------------------------------------
\subsection{Proof of Lemma~\ref{L:IFredholmness}}\label{SS:prIFredholmness}
Consider the bounded operator
\begin{equation}
	\Phi(\mathcal{T}):=\mathcal{T}\,(I+\mathcal{T}^2)^{-\frac{1}{2}},
\end{equation}
where $\mathcal{T}$ is the operator defined in \eqref{E:Op1}.
Observe that the operator $\Phi(\mathcal{T})$ is $\tau$-Fredholm if and only if $T$ is $\tau$-Fredholm, and 
\[
	N_\tau(\lambda; \calT^2) \ = \ N_\tau\left(\,\frac{\lambda}{1+\lambda};\Phi(\calT)^2\,\right).
\]
Thus Lemma~\ref{L:IFredholmness} is a direct consequence of the following:
%%%%
\begin{lemma}
	Let $S$ be a bounded self-adjoint $A$-linear operator on an $A$-Hilbert space $H$.
	Then $S$ is $\tau$-Fredholm if and only if there exists $\lambda>0$ such that
	\begin{equation}\label{E:NtaucalT2}
		N_\tau(\lambda;S^2) \ < \ \infty.
	\end{equation}
\end{lemma}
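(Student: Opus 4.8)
I plan to treat the two implications separately, with the spectral theorem for the bounded self-adjoint $A$-linear operator $S$ as the common tool. Let $\{E_\mu\}$ denote the spectral resolution of $S$ and, for $\delta>0$, let $Q_\delta$ be the spectral projection of $S$ onto $[-\delta,\delta]$. Since $S^2\geq 0$ and $\{S^2\leq\lambda\}=\{|S|\leq\sqrt\lambda\}$, the spectral mapping theorem identifies $N_\tau(\lambda;S^2)=\dim_\tau\im Q_{\sqrt\lambda}$; hence the condition $N_\tau(\lambda;S^2)<\infty$ is equivalent to $\dim_\tau Q_\delta<\infty$ with $\delta=\sqrt\lambda$. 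This reduces the lemma to relating $\tau$-Fredholmness of $S$ to the finiteness of $\dim_\tau Q_\delta$ for some $\delta>0$.

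For the easy direction (finite spectral count $\Rightarrow$ $\tau$-Fredholm), set $P:=Q_\delta$ with $\dim_\tau\im P<\infty$ and take $L:=\im(I-P)$, the spectral subspace on which $|S|>\delta$. Since $0\in[-\delta,\delta]$, we have $\Ker S\subset\im P$, which gives condition (i) of Definition~\ref{D:Fredholm operator}. As a spectral subspace, $L$ is $A$-invariant and $S$-invariant, and $S|_L$ is bounded below by $\delta$, hence invertible on $L$; therefore $L=S(L)\subset\im S$ while $L^\perp=\im P$ has $\dim_\tau L^\perp<\infty$, which is condition (ii).

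The substantive direction is $\tau$-Fredholm $\Rightarrow$ $\dim_\tau Q_\delta<\infty$ for some $\delta$, where the task is to convert the abstract condition (ii) into a quantitative spectral gap. Given the $A$-Hilbert subspace $L\subset\im S$ with $\dim_\tau L^\perp<\infty$, I would form the preimage $N:=S^{-1}(L)\cap(\Ker S)^\perp$, a closed $A$-invariant subspace. Using $(S^{-1}(L))^\perp=\overline{S(L^\perp)}$ (self-adjointness of $S$), together with the fact that the image of a bounded $A$-operator has $\tau$-dimension at most that of its domain, one gets $\dim_\tau(S^{-1}(L))^\perp\leq\dim_\tau L^\perp<\infty$; combined with $\dim_\tau\Ker S<\infty$ and subadditivity of $\dim_\tau$ this yields $\dim_\tau N^\perp<\infty$. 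The restriction $S|_N\colon N\to L$ is a continuous $A$-linear bijection of $A$-Hilbert spaces, so the open mapping theorem furnishes a constant $c>0$ with $\|Su\|\geq c\|u\|$ for all $u\in N$.

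Finally, choose $\delta$ with $0<\delta<c$. On $\im Q_\delta$ one has $\|Su\|\leq\delta\|u\|$, so $\im Q_\delta\cap N=\{0\}$; equivalently $Q_\delta\wedge P_N=0$, where $P_N$ is the projection onto $N$. Kaplansky's parallelogram law in the finite von Neumann algebra $A$ gives
\[
	\dim_\tau(Q_\delta\wedge P_N)\ \geq\ \dim_\tau Q_\delta\ -\ \dim_\tau N^\perp,
\]
whence $\dim_\tau Q_\delta\leq\dim_\tau N^\perp<\infty$, completing the argument. I expect the main obstacle to be the bookkeeping of von Neumann dimensions of intersections and orthogonal complements in the possibly infinite-$\tau$-dimensional ambient space $H$ — in particular justifying the displayed inequality $\dim_\tau(e\wedge f)\geq\dim_\tau e-\dim_\tau f^\perp$ via Murray--von Neumann equivalence $e-e\wedge f\sim(e\vee f)-f\leq f^\perp$ — rather than the functional-analytic input, which is standard.
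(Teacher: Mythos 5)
Your argument is correct and follows essentially the same strategy as the paper's own proof: the easy direction via the spectral projection onto $\{S^2\le\lambda\}$, and the substantive direction by pulling $L$ back to $N=S^{-1}(L)\cap(\Ker S)^\perp$, applying the open mapping theorem to get $\|Su\|\ge c\|u\|$ on $N$, and playing the low-energy spectral subspace off against $N$ to bound $N_\tau(\lambda;S^2)$ by $\dim_\tau N^\perp$. The only differences are in bookkeeping: you control $\dim_\tau N^\perp$ via the duality $(S^{-1}(L))^\perp=\overline{S(L^\perp)}$ whereas the paper decomposes $N^\perp$ as $L_2\oplus\Ker S$ and injects $S(L_2)$ into $L^\perp$ by $I-P_L$, and you invoke Kaplansky's parallelogram law where the paper cites \cite[Theorem~1.12]{Luck02book} --- both routes give the same estimates.
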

%%%%
\begin{proof}
Suppose $N_\tau(\lambda;S^2) < \infty$ for some $\lambda>0$. We need to show that $S$ is $\tau$-Fredholm in the sense of  Definition~\ref{D:Fredholm operator}.

Since the function $N_\tau(\cdot;S^2)$ is nondecreasing, we have: 
\[
	\dim_\tau\Ker S\ =\ N_\tau(0;S^2) \ \leq \ N_\tau(\lambda;S^2)\ <\ \infty.
\]
Thus the condition (i) of Definition~\ref{D:Fredholm operator} is satisfied.

Set $L:=\im (I-P_\lambda(S^2))$, where $P_\lambda(S^2)$ denotes the orthogonal  projection onto the spectral subspace of the operator $S^2$ corresponding to $[0, \lambda]$.
Then $L$ is $A$-invariant, $L\subseteq \im S^2\subseteq \im S$ and
\[
	\dim_\tau L^\perp\ =\ \dim_\tau (P_\lambda(S))\ =\ N_\tau(\lambda;S^2)\ <\ \infty.
\]
Hence the condition (ii) of  Definition~\ref{D:Fredholm operator} is also satisfied.
	
\

Suppose now that $S$ is $\tau$-Fredholm and  let $L$ be an $A$-Hilbert subspace of $H$ such that $L\subseteq\im S$ and $\dim_\tau L^\perp<\infty$. 
	
The map 
\begin{equation}\label{E:S1}
	S_1\ :=\ S|_{(\Ker S)^\perp}:\,(\Ker S)^\perp\ \rightarrow\ H
\end{equation}
is one-to-one. 	
Set 
\[
	L_1\ := \ S_1^{-1}(L)\ \subset (\Ker S)^\perp.
\]
Then $L_1$ is a closed $A$-invariant subspace of $H$ and $S:L_1\to L$ is a bijection. It follows from the Open Mapping Theorem that there exists  $\epsilon>0$ such that 
\begin{equation}\label{E:Su>eps}
	\left\|\,Su\,\right\| \ > \ \epsilon\,\|u\|, \qquad \text{for any}\quad u\in L_1.
\end{equation}

We finish the proof of the lemma by showing that any $\lambda<\epsilon^2$  satisfies \eqref{E:NtaucalT2}. Recall that $P_\lambda(S^2)$ denotes the orthogonal projection on the spectral subspace of $S^2$ corresponding to the interval $[0,\lambda]$.  Thus   
\begin{equation}\label{E:Su<epsilon}
	\left\|\,Su\,\right\| \ \le \ \sqrt{\lambda}\,\|u\| \ < \ \epsilon\,\|u\|,
	\qquad \text{for  any}\quad u\in \im{}P_\lambda(S^2).
\end{equation}

From \eqref{E:Su>eps}  and \eqref{E:Su<epsilon} we now conclude that $\im P_\lambda(S^2)\cap L_1=\{0\}$.  Hence,  it follows from \cite[Theorem~1.12]{Luck02book} that 
\begin{equation}\label{E:Ntau<L1}
		N_\tau(\lambda;S^2)\ \leq\  \dim_\tau L_1^\perp.
\end{equation}
To finish the proof of the lemma it is now enough to show that $ \dim_\tau L_1^\perp<\infty$. 

Let $L_2$ be the orthogonal complement of $L_1$ in $(\Ker S)^\perp$. Then 
\begin{equation}\label{E:L1=L2+KerS}
	\dim_\tau L_1^\perp\ =\ \dim_\tau L_2\ +\ \dim_\tau\Ker S.
\end{equation}
Since $S$ is $\tau$-Fredholm, $\dim_\tau\Ker{}S$ is finite and it suffices to prove that $\dim_\tau L_2$ is finite. 

Notice that, by \eqref{E:Su>eps}, $S(L_2)$ is a closed subspace of $H$ and $S:L_2\to S(L_2)$ is a topological isomorphism. Hence, 
\begin{equation}\label{E:S(L2)=L2}
	\dim_\tau{}S(L_2)\ =\  \dim_\tau{}L_2,
\end{equation}
cf. \cite[Lemma~1.13]{Luck02book}. Since the map \eqref{E:S1} is one-to-one, we conclude that 
\[
	S(L_2)\cap L\ =\  S_1(L_2)\cap L \ = \ \{0\}.
\]
Let  $P_L$ be the orthogonal projection onto the subspace $L$. Then 
\[
	I-P_L:\,S(L_2)\ \to L^\perp
\]
is a one-to one map. Therefore, by \cite[Theorem~1.12(2)]{Luck02book}, 
\[
	\dim_\tau S(L_2)\ \leq\  \dim_\tau L^\perp\ <\ \infty.
\]
From \eqref{E:Ntau<L1}, \eqref{E:L1=L2+KerS}, and \eqref{E:S(L2)=L2} we now conclude that $N_\tau(\lambda;S^2)<\infty$. 
\end{proof}

%------------------------------------------------------------------------------------------------------------------------

%-------------------------------------------
\subsection{Proof of Theorem~\ref{T:IFredholm}}\label{SS:prIFredholm}
We are now ready to prove $\tau$-Fredholmness of a Callias-type operator $D+F$. The operator \eqref{E:IDF} satisfies the conditions of Theorem~\ref{T:IsaD} and, hence, is essentially self-adjoint. Hence
\begin{equation}\label{E:NDphi=NDphi2}
	N_\tau(\lambda;\calD_\calF) \ = \ N_\tau(\lambda^2;\calD_\calF^2).
\end{equation}
Moreover
\begin{equation}\label{E:calDPhi2}
	\calD_\calF^2\ = \ \calD^2 \ +\ \{\calD,\calF\}\ +\ \calF^2\ = \ \calD^2\ + \ V,
\end{equation}
where $V=\{\calD,\calF\}\ +\ \calF^2$. By the Callias condition \eqref{E:Calliastype}, $V(x)>\epsilon$ for all $x\in M\backslash{K}$. Hence, it follows from Theorem~\ref{T:IN<infty} that $N_\tau(\epsilon; \calD_\calF^2)$ is finite. Theorem~\ref{T:IFredholm} follows now from Lemma~\ref{L:IFredholmness}.

 \hfill$\square$

%-----------------------------------------------
%-----------------------------------------------
\section{Stability of the Callias-type $\tau$-index} \label{S:Callias}

In this section we prove Theorem~\ref{T:ICallias}.

%-----------
\subsection{Continuous perturbations}\label{SS:continuous perturbation}
We start with an abstract result. Let $H_1$ and $H_2$ be $A$-Hilbert spaces and suppose that  $T:H_1\to H_2$ is a closed $A$-linear operator. We denote by $\Dom(T)$ the domain of $T$ considered as a Hilbert space with the graph scalar product
\[
	(x,y)_{\Dom(T)}\ :=\ (x,y)_{H_1}\ +\ (Dx,Dy)_{H_2}.
\]
Then $T:\Dom(T)\to H_2$ is a bounded $A$-linear operator. 

Assume in addition that $T$ is $\tau$-Fredholm, cf. Definition~\ref{D:Fredholm operator}.

%--
\begin{definition}\label{D:boundedperturbation}
A one parameter family  $\{T_t\}_{t\in[0,1]}$ of $\tau$-Fredholm operators $T_t:H_1\to H_2$ is called a {\em continuous perturbation of $T$ with fixed domain} if $T_0=T$,  $\Dom(T_t)=\Dom(T)$ for all $t\in [0,1]$, and the induced family $T_t:\Dom(T)\to H_2$ is continuous in operator norm (as a family of maps between the Hilbert spaces $\Dom(T)$ and $H_2$). 
\end{definition}

The next lemma shows that the $\tau$-index is stable with respect to continuous perturbations with fixed domain.

\begin{lemma}\label{L:Abstract Stability}
Let $\{T_t\}_{t\in[0,1]}$ be a continuous perturbation of $T$ with fixed domain. Then 
\begin{equation}\label{E:Abstract Stability}
	\ind_\tau T_t\ = \ \ind_\tau T, \qquad\text{for all}\quad t\in [0,1].
\end{equation}
\end{lemma}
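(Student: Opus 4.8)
The plan is to reduce the statement to one about bounded operators by passing to the bounded transform, and then to prove that $t\mapsto\ind_\tau T_t$ is \emph{locally} constant on $[0,1]$; since $[0,1]$ is connected, local constancy immediately gives \eqref{E:Abstract Stability}. For a closed $\tau$-Fredholm operator $T\colon H_1\to H_2$ I would work with its bounded transform $b(T):=T(I+T^*T)^{-1/2}$, which is precisely the off-diagonal block of the operator $\Phi(\calT)=\calT(I+\calT^2)^{-1/2}$ already used in Section~\ref{SS:prIFredholmness}; indeed $\Phi(\calT)=\left(\begin{smallmatrix}0&b(T)^*\\ b(T)&0\end{smallmatrix}\right)$. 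The operator $b(T)$ is bounded with $\|b(T)\|\le 1$, it is $\tau$-Fredholm if and only if $T$ is, and since $(I+T^*T)^{-1/2}$ is injective one checks $\Ker b(T)=\Ker T$ and $\Ker b(T)^*=\Ker T^*$, so that $\ind_\tau b(T)=\ind_\tau T$. The first key point is that $t\mapsto b(T_t)$ is continuous in the norm of $\boundedops(H_1,H_2)$: this follows from the hypothesis that $T_t\colon\Dom(T)\to H_2$ is norm continuous, combined with the standard norm continuity of the bounded (Riesz) transform of a self-adjoint operator under perturbations with common domain, applied to the self-adjoint family $\Phi(\calT_t)$ and read off on its off-diagonal block. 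I expect this continuity statement to be the main technical obstacle; the remainder is essentially algebra.

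With this reduction in hand, I would fix $t_0$ and set $B_0:=b(T_{t_0})$. Using $\tau$-Fredholmness, choose $\delta>0$ with $N_\tau(\delta;B_0^*B_0)<\infty$ and $N_\tau(\delta;B_0B_0^*)<\infty$, and let $P$ and $Q$ be the spectral projections of $B_0^*B_0$ and $B_0B_0^*$ onto $[0,\delta]$; these are $A$-linear projections of finite $\tau$-dimension. The intertwining relation $B_0\,f(B_0^*B_0)=f(B_0B_0^*)\,B_0$ (with $f=\mathbf 1_{(\delta,1]}$) gives $QB_0(I-P)=0$, while $B_0^*B_0\ge\delta$ on $\im(I-P)$ and $B_0B_0^*\ge\delta$ on $\im(I-Q)$; hence $B_0$ restricts to a bounded-below map $\im(I-P)\to\im(I-Q)$ whose adjoint is also bounded below, i.e. to an isomorphism of $A$-Hilbert spaces. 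The elementary fact I would then record is that for \emph{any} finite-$\tau$-dimensional projections $P,Q$ for which the corner $(I-Q)B(I-P)$ is invertible, the Schur-complement factorization of $B$ with respect to $H_1=\im P\oplus\im(I-P)$ and $H_2=\im Q\oplus\im(I-Q)$, together with the computation of the index of a map between finite-$\tau$-dimensional $A$-modules, yields $\ind_\tau B=\dim_\tau\im P-\dim_\tau\im Q$. Here I use the $\tau$-dimension calculus from \cite{Luck02book}: invariance of the index under composition with bounded invertibles, additivity for block-diagonal operators, and $\ind_\tau f=\dim_\tau U-\dim_\tau W$ for a bounded $A$-map $f\colon U\to W$ between finite-$\tau$-dimensional $A$-Hilbert spaces.

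Finally, for $t$ near $t_0$ I would write $b(T_t)=B_0+R_t$ with $\|R_t\|\to 0$ by the continuity established above. Since $(I-Q)B_0(I-P)=B_0(I-P)$ is invertible with inverse of norm $\le\delta^{-1/2}$, the perturbed corner $(I-Q)b(T_t)(I-P)=B_0(I-P)+(I-Q)R_t(I-P)$ remains invertible as soon as $\|R_t\|<\sqrt{\delta}$. Applying the formula of the previous paragraph to $b(T_t)$ with the \emph{same} fixed projections $P$ and $Q$ then gives $\ind_\tau b(T_t)=\dim_\tau\im P-\dim_\tau\im Q=\ind_\tau B_0$, whence $\ind_\tau T_t=\ind_\tau T_{t_0}$ for all $t$ in a neighborhood of $t_0$. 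Thus $t\mapsto\ind_\tau T_t$ is locally constant, and by connectedness of $[0,1]$ it is constant, which is \eqref{E:Abstract Stability}. The delicate ingredient is the norm continuity of $t\mapsto b(T_t)$; the Schur-complement computation and the finite $\tau$-dimensional bookkeeping are robust precisely because $P$ and $Q$ are held fixed throughout the perturbation, so no spectral subspace near $0$ needs to be tracked as $t$ varies.
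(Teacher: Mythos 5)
Your proposal is correct in its overall architecture but takes a genuinely different route from the paper at the decisive step, and it leaves one real gap. The paper reduces to bounded operators by setting $\Phi(T_t):=T_t(I+T^*T)^{-1/2}$ with the resolvent factor built from the \emph{fixed} base operator $T$: since $(I+T^*T)^{-1/2}\colon H_1\to\Dom(T)$ is a fixed isometric isomorphism, norm continuity of $t\mapsto\Phi(T_t)$ is immediate from the hypothesis that $t\mapsto T_t\in\mathcal{B}(\Dom(T),H_2)$ is continuous, and the paper then simply cites Breuer's stability theorem for bounded $\tau$-Fredholm operators. You do two things differently: you use the genuine bounded transform $b(T_t)=T_t(I+T_t^*T_t)^{-1/2}$, and you replace the citation of Breuer by a self-contained local-constancy argument. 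The second half of your proof is correct and is, in effect, a proof of the Breuer stability theorem in the case at hand: the intertwining relation $B_0(I-P)=(I-Q)B_0$ makes $B_0$ block-diagonal relative to the fixed finite-$\tau$-dimensional projections $P,Q$; the corner $(I-Q)B_0(I-P)$ is bounded below with bounded-below adjoint, hence invertible with inverse of norm at most $\delta^{-1/2}$; invertibility is open, so the Schur-complement factorization applies to every nearby operator with the \emph{same} $P,Q$; and the $\tau$-dimension calculus gives $\ind_\tau=\dim_\tau\im P-\dim_\tau\im Q$ throughout. This buys a proof that does not rely on \cite{Breuer69Fredholm2}, at the cost of more bookkeeping.

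The gap is precisely the step you flag: the norm continuity of $t\mapsto b(T_t)$, equivalently of $t\mapsto(I+T_t^*T_t)^{-1/2}$. This is not a formality and does not follow by ``reading off the off-diagonal block'' of the family $\Phi(\calT_t)$, because that family is built out of the very bounded transforms whose continuity is in question; what you are invoking is a genuine theorem (continuity of the bounded transform on families of closed operators with common domain, graph-norm continuous), which would itself need a proof, e.g.\ via the integral formula $(I+S)^{-1/2}=\pi^{-1}\int_0^\infty\lambda^{-1/2}(\lambda+I+S)^{-1}\,d\lambda$ and resolvent identities. The efficient fix is to adopt the paper's device: run your Schur-complement argument on $\Phi(T_t)=T_t(I+T^*T)^{-1/2}$ rather than on $b(T_t)$. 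Your local-constancy argument uses nothing about the family beyond norm continuity, boundedness, $\tau$-Fredholmness, and equality of indices with $T_t$, and all of these hold for $\Phi(T_t)$ because it differs from $T_t$ by composition with a fixed isomorphism of $A$-Hilbert spaces. With that substitution the flagged ``main technical obstacle'' disappears and your proof is complete.
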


%----------------------------------------------------------------------------------------------------------------
\begin{proof}
In general, note that the operator 
\[
	(I+T^*T)^{\frac{1}{2}}:\,\Dom(T)\ \longrightarrow\ H_1
\]
is an isometric isomorphism of $A$-Hilbert spaces and consider the family of operators 
\[
	\Phi(T_t):=T_t(I+T^*T)^{-\frac{1}{2}}:\,H_1\ \longrightarrow\ H_2.
\]
The operators $\Phi(T_t)$  form a continuous family of bounded $\tau$-Fredholm operators and 
\[
	\ind_\tau \Phi(T_t) \ = \ \ind_\tau T_t.
\]
Hence, it is enough to prove the lemma for the case when $\Dom(T)= H_1$  and $T_t$ is a  continuous family of bounded operators. In this case the lemma is proven in  \cite[Theorem 4]{Breuer69Fredholm2}.
\end{proof}

%---------------------------
\subsection{Proof of Theorem~\ref{T:ICallias}}\label{SS:prICallias}
For $0\leq t\leq 1$, we set:
\[
	D_t\ :=\ D\ +\ F_0\ +\ t\,(F_1-F_0).
\]
Since the $A$-endomorphism $F_1-F_0$ vanishes outside of the compact set $K$, all $D_t$  satisfy the condition \eqref{E:Calliastype} and,  hence, are Callias-type operators. It follows from Theorem~\ref{T:IFredholm} that all the operators $D_t$ are $\tau$-Fredholm.

Since $F_1-F_0:L^2(M,E^+)\to L^2(M,E^-)$ is a bounded $A$-operator, the domain of $D_t$ is independent of $t$. Finally, we have:
\[
	\left\|D_s-D_t\right\|=\left|s-t\right|\left\|F_1-F_0\right\|,\qquad \text{for all}\quad s,\,t\in [0,1].
\]
Thus the family $\{D_t\}$ is continuous in operator norm.  Theorem~\ref{T:IFredholm} follows now from Lemma~\ref{L:Abstract Stability}.

\hfill$\square$

% \bib, bibdiv, biblist are defined by the amsrefs package.
\begin{bibdiv}
\begin{biblist}

\bib{Anghel93}{article}{
      author={Anghel, N.},
       title={An abstract index theorem on noncompact {R}iemannian manifolds},
        date={1993},
        ISSN={0362-1588},
     journal={Houston J. Math.},
      volume={19},
      number={2},
       pages={223\ndash 237},
      review={\MR{1225459 (94c:58193)}},
}

\bib{Anghel93Callias}{article}{
      author={Anghel, N.},
       title={On the index of {C}allias-type operators},
        date={1993},
        ISSN={1016-443X},
     journal={Geom. Funct. Anal.},
      volume={3},
      number={5},
       pages={431\ndash 438},
         url={http://dx.doi.org/10.1007/BF01896237},
      review={\MR{1233861 (94m:58213)}},
}

\bib{Atiyah76}{article}{
      author={Atiyah, M.~F.},
       title={Elliptic operators, discrete groups and von neumann algebras},
        date={1976},
     journal={Asr\'erisque},
      volume={32/33},
       pages={43\ndash 72},
}

\bib{Blackadar06book}{book}{
      author={Blackadar, B.},
       title={Operator algebras},
      series={Encyclopaedia of Mathematical Sciences},
   publisher={Springer-Verlag, Berlin},
        date={2006},
      volume={122},
        ISBN={978-3-540-28486-4; 3-540-28486-9},
         url={http://dx.doi.org/10.1007/3-540-28517-2},
        note={Theory of $C{^{*}}$-algebras and von Neumann algebras, Operator
  Algebras and Non-commutative Geometry, III},
      review={\MR{2188261 (2006k:46082)}},
}

\bib{BottSeeley78}{article}{
      author={Bott, R.},
      author={Seeley, R.},
       title={Some remarks on the paper of {C}allias: ``{A}xial anomalies and
  index theorems on open spaces'' [{C}omm. {M}ath. {P}hys. {\bf 62} (1978), no.
  3, 213--234;\ {MR} 80h:58045a]},
        date={1978},
        ISSN={0010-3616},
     journal={Comm. Math. Phys.},
      volume={62},
      number={3},
       pages={235\ndash 245},
         url={http://projecteuclid.org/euclid.cmp/1103904396},
      review={\MR{507781 (80h:58045b)}},
}

\bib{Br2}{article}{
      author={Braverman, M.},
       title={On self-adjointness of a {S}chr\"odinger operator on differential
  forms},
        date={1998},
        ISSN={0002-9939},
     journal={Proc. Amer. Math. Soc.},
      volume={126},
       pages={617\ndash 623},
}

\bib{Br-index}{article}{
      author={Braverman, M.},
       title={Index theorem for equivariant {D}irac operators on noncompact
  manifolds},
        date={2002},
     journal={$K$-Theory},
      volume={27},
      number={1},
       pages={61\ndash 101},
}

\bib{BrNonCompactGroup}{article}{
      author={Braverman, M.},
       title={The index theory on non-compact manifolds with proper group
  action},
        date={2015},
      journal={J. Geom. Phys.},
       volume={98},
       pages={275 -- 284},
}

\bib{BrCano14}{incollection}{
      author={Braverman, M.},
      author={Cano, L.},
       title={Index theory for non-compact {$G$}-manifolds},
        date={2014},
   booktitle={Geometric, algebraic and topological methods for quantum field
  theory},
   publisher={World Sci. Publ., Hackensack, NJ},
       pages={60\ndash 94},
         url={http://dx.doi.org/10.1142/9789814460057_0002},
      review={\MR{3204959}},
}

\bib{BrMiSh02}{article}{
      author={Braverman, M.},
      author={Milatovich, O.},
      author={Shubin, M.},
       title={Essential selfadjointness of {S}chr\"odinger-type operators on
  manifolds},
        date={2002},
     journal={Russian Math. Surveys},
      volume={57},
       pages={41\ndash 692},
}

\bib{Breuer68Fredholm1}{article}{
      author={Breuer, M.},
       title={Fredholm theories in von {N}eumann algebras. {I}},
        date={1968},
        ISSN={0025-5831},
     journal={Math. Ann.},
      volume={178},
       pages={243\ndash 254},
      review={\MR{0234294 (38 \#2611)}},
}

\bib{Breuer69Fredholm2}{article}{
      author={Breuer, M.},
       title={Fredholm theories in von {N}eumann algebras. {II}},
        date={1969},
        ISSN={0025-5831},
     journal={Math. Ann.},
      volume={180},
       pages={313\ndash 325},
      review={\MR{0264407 (41 \#9002)}},
}

\bib{Bunke95}{article}{
      author={Bunke, U.},
       title={A {$K$}-theoretic relative index theorem and {C}allias-type
  {D}irac operators},
        date={1995},
        ISSN={0025-5831},
     journal={Math. Ann.},
      volume={303},
      number={2},
       pages={241\ndash 279},
         url={http://dx.doi.org/10.1007/BF01460989},
      review={\MR{1348799 (96e:58148)}},
}

\bib{BFKM}{article}{
      author={Burghelea, D.},
      author={Friedlander, L.},
      author={Kappeler, T.},
      author={McDonald, P.},
       title={Analytic and {R}eidemeister torsion for representations in finite
  type {H}ilbert modules},
        date={1996},
     journal={Geom. Funct. Anal.},
       pages={751\ndash 859},
}

\bib{Callias78}{article}{
      author={Callias, C.},
       title={Axial anomalies and index theorems on open spaces},
        date={1978},
        ISSN={0010-3616},
     journal={Comm. Math. Phys.},
      volume={62},
      number={3},
       pages={213\ndash 234},
         url={http://projecteuclid.org/euclid.cmp/1103904395},
      review={\MR{507780 (80h:58045a)}},
}

\bib{Chernoff73}{article}{
      author={Chernoff, P.},
       title={Essential self-adjointness of powers of generators of hyperbolic
  equations},
        date={1973},
     journal={J. Functional Analysis},
      volume={12},
       pages={401\ndash 414},
}

\bib{FomenkoMoscenko80}{article}{
      author={Fomenko, A.~T.},
      author={Mi{\v{s}}{\v{c}}enko, A.~S.},
       title={The index of elliptic operators over ${C}^*$-algebras},
        date={1980},
     journal={Math. USSR, Izv.},
      volume={15},
       pages={87\ndash 112},
}

\bib{GromovLawson83}{article}{
      author={Gromov, M.},
      author={Lawson, H.~B., Jr.},
       title={Positive scalar curvature and the {D}irac operator on complete
  {R}iemannian manifolds},
        date={1983},
        ISSN={0073-8301},
     journal={Inst. Hautes \'Etudes Sci. Publ. Math.},
      number={58},
       pages={83\ndash 196 (1984)},
         url={http://www.numdam.org/item?id=PMIHES_1983__58__83_0},
      review={\MR{720933 (85g:58082)}},
}

\bib{MathaiHochs}{article}{
      author={Hochs, P.},
      author={Mathai, V.},
       title={Geometric quantization and families of inner products},
     journal={50 pages, [arXiv:1309.6760]},
}

\bib{Luck02book}{book}{
      author={L{\"u}ck, W.},
       title={{$L^2$}-invariants: theory and applications to geometry and
  {$K$}-theory},
      series={Ergebnisse der Mathematik und ihrer Grenzgebiete. 3. Folge. A
  Series of Modern Surveys in Mathematics [Results in Mathematics and Related
  Areas. 3rd Series. A Series of Modern Surveys in Mathematics]},
   publisher={Springer-Verlag, Berlin},
        date={2002},
      volume={44},
        ISBN={3-540-43566-2},
         url={http://dx.doi.org/10.1007/978-3-662-04687-6},
      review={\MR{1926649 (2003m:58033)}},
}

\bib{MaZhang-noncompact}{misc}{
      author={Ma, X.},
      author={Zhang, W.},
       title={Geometric quantization for proper moment maps},
        date={2008},
        note={arXiv:0812.3989},
}

\bib{MaZhang14}{article}{
      author={Ma, X.},
      author={Zhang, W.},
       title={Geometric quantization for proper moment maps: the {V}ergne
  conjecture},
        date={2014},
        ISSN={0001-5962},
     journal={Acta Math.},
      volume={212},
      number={1},
       pages={11\ndash 57},
         url={http://dx.doi.org/10.1007/s11511-014-0108-3},
      review={\MR{3179607}},
}

\bib{MathaiZhang10}{article}{
      author={Mathai, V.},
      author={Zhang, W.},
       title={Geometric quantization for proper actions},
        date={2010},
     journal={Adv. Math.},
      volume={225},
       pages={1224\ndash 1247},
        note={With an appendix by Ulrich Bunke [arXiv:0806.3138]},
}

\bib{Ol1}{article}{
      author={Oleinik, I.M.},
       title={On the essential self-adjointness of the {Schr\"odinger} operator
  on a complete {Riemannian} manifold},
        date={1993},
     journal={Mathematical Notes},
      volume={54},
       pages={934\ndash 939},
}

\bib{Ol2}{article}{
      author={Oleinik, I.M.},
       title={On the connection of the classical and quantum mechanical
  completeness of a potential at infinity on complete {Riemannian} manifolds},
        date={1994},
     journal={Mathematical Notes},
      volume={55},
       pages={380\ndash 386},
}

\bib{Paradan03}{article}{
      author={Paradan, P.-{\'E}.},
       title={{$\operatorname{Spin}\sp c$}-quantization and the
  {$K$}-multiplicities of the discrete series},
        date={2003},
     journal={Ann. Sci. \'Ecole Norm. Sup. (4)},
      volume={36},
       pages={805\ndash 845},
}

\bib{Ro-Be70}{article}{
      author={Rofe-Beketov, F.S.},
       title={Self-adjointness conditions for the {Schr\"odinger} operator},
        date={1970},
     journal={Mat. Zametki},
      volume={8},
       pages={741\ndash 751},
}

\bib{Schick05}{article}{
      author={Schick, T.},
       title={{$L^2$}-index theorems, {$KK$}-theory, and connections},
        date={2005},
        ISSN={1076-9803},
     journal={New York J. Math.},
      volume={11},
       pages={387\ndash 443 (electronic)},
         url={http://nyjm.albany.edu:8000/j/2005/11_387.html},
      review={\MR{2188248 (2006h:19007)}},
}

\bib{Shubin99}{incollection}{
      author={Shubin, M.},
       title={Spectral theory of the {S}chr\"odinger operators on non-compact
  manifolds: qualitative results},
        date={1999},
   booktitle={Spectral theory and geometry ({E}dinburgh, 1998)},
      series={London Math. Soc. Lecture Note Ser.},
      volume={273},
   publisher={Cambridge Univ. Press, Cambridge},
       pages={226\ndash 283},
         url={http://dx.doi.org/10.1017/CBO9780511566165.009},
      review={\MR{1736869 (2001d:58037)}},
}

\bib{Shubin92}{article}{
      author={Shubin, M.},
       title={Spectral theory of elliptic operators on non-compact manifolds},
        date={1992},
     journal={Ast\'erisque},
      volume={207},
       pages={37\ndash 108},
}

\bib{Shubin96Morse}{article}{
      author={Shubin, M.},
       title={Semiclassical asymptotics on covering manifolds and {M}orse
  inequalities},
        date={1996},
     journal={Geom. Funct. Anal.},
      volume={6},
       pages={370\ndash 409},
}

\end{biblist}
\end{bibdiv}

\end{document}